\documentclass[12pt]{amsart}

\addtolength{\hoffset}{-2cm}
\addtolength{\textwidth}{4cm}
\addtolength{\voffset}{-1cm}
\addtolength{\textheight}{2cm}

\newtheorem{thm}{Theorem}[section]

\newtheorem{cor}{Corollary}

\newtheorem{prp}{Proposition}

\DeclareMathOperator{\inv}{inv}

\DeclareMathOperator{\sor}{sor}
\DeclareMathOperator{\cy}{cyc}
\DeclareMathOperator{\m}{rl-min}
\DeclareMathOperator{\mb}{nmin}

\title{The sorting index}

\author[T. K. Petersen]{T. Kyle Petersen}
\address{Department of Mathematical Sciences, DePaul University, Chicago, IL, USA}
\thanks{Work supported by NSA Young Investigator grant}

\date{July 2010}

\begin{document}

\maketitle

\begin{abstract}
We consider a bivariate polynomial that generalizes both the length and reflection length generating functions in a finite Coxeter group. In seeking a combinatorial description of the coefficients, we are led to the study of a new Mahonian statistic, which we call the sorting index. The sorting index of a permutation and its type B and type D analogues have natural combinatorial descriptions which we describe in detail.
\end{abstract}

\section{Introduction}

The following is a well-known and elegant formula for the distribution of inversions over permutations of $n$: \begin{equation}\label{eq:mah}
 \sum_{w \in S_n} q^{\inv(w)} = (1+q)(1+q+q^2) \cdots ( 1+q+\cdots +q^{n-1}).
 \end{equation} (Knuth \cite[5.1.1]{K} attributes the formula to Rodrigues.)
MacMahon \cite{M} showed that another statistic, now called the \emph{major index}, has the same distribution as inversion number. In his honor, any permutation statistic with this distribution is referred to as a \emph{Mahonian statistic}.
Similarly, there is a simple formula for the distribution of the number of cycles in a permutation (see, e.g., \cite[Prop. 1.3.4]{St}): 
\begin{equation}\label{eq:cyc}
  \sum_{w \in S_n} t^{\cy(w)} = t(t+1)(t+2)\cdots (t+n-1). 
\end{equation}
The coefficients of this polynomial are well-known as the (unsigned) \emph{Stirling numbers of the first kind}.

Both Equations \eqref{eq:mah} and \eqref{eq:cyc} can be given simple combinatorial proofs, and it is interesting that they both factor with the same number of terms. This suggests a natural bivariate polynomial: \[ S_n(q,t):= t(t+q)(t+q+q^2)\cdots (t+q+\cdots+q^{n-1}),\] which clearly generalizes both polynomials above. This polynomial does not give the joint distribution of inversion number and cycle number. However, we find statistics $\sor(w)$ and $\m(w)$ such that $S_n(q,t)$ is the generating function for both the pair of statistics $(\inv,\m)$ and the pair $(\sor,\cy)$. In particular, $\sor(w)$ is Mahonian, and $\m(w)$ has a ``Stirling" distribution. While a variation of $\m(w)$ appears early in \cite{St}, $\sor(w)$ seems not to have appeared in the literature. Given the variety of distinct Mahonian statistics (see \cite{BS}, \cite{Cl}, \cite{CSZ}, \cite{FZ}, \cite{FZ2}, \cite[A008302]{Sl} and references therein) we find the novelty of $\sor(w)$ noteworthy. We call $\sor(w)$ the \emph{sorting index} of $w$.

In Section \ref{sec:A} we make precise the story told above, and we follow that discussion with very similar results for signed permutations in Section \ref{sec:B}.  In the context of reflection groups, $\inv(w)$ corresponds to \emph{length} and $n-\cy(w)$ corresponds to \emph{reflection length}.  Sections \ref{sec:D} and \ref{sec:W} discuss how the ideas of Sections \ref{sec:A} and \ref{sec:B} might be generalized to other Coxeter groups. Indeed, the discovery of the sorting index followed from the investigation of a common refinement of the length and reflection length generating functions in an arbitrary finite reflection group. The polynomial $S_n(q,t)$ given above is the type A version. See Section \ref{sec:W}. Another avenue for generalization could be from permutations to \emph{words} (with repeated letters) as in \cite[Section III]{M}, though we do not pursue this line of inquiry here. In Section \ref{sec:rem} we remark upon recent work of others related to the type A version of the sorting index.

Throughout the paper, we will let $[n]_q=\frac{1-q^n}{1-q} = 1+q + \cdots +q^{n-1}$. We refer the reader to \cite[Sections 8.1 and 8.2]{BB} for facts about the combinatorics of Coxeter groups (in particular combinatorial descriptions for inversions of types $B_n$ and $D_n$) that we use here without proof.

\section{Permutations (type A)}\label{sec:A}

One generating set for $S_n$ is the set of transpositions:
\[ T = \{ (ij) : 1 \leq i < j \leq n \}; \]
another is the set of adjacent transpositions:
\[ S = \{ (i \, i+1) : 1\leq i \leq n-1\} \subset T.\] We will usually write elements $w \in S_n$ as words on the set $\{1,2,\ldots,n\}$ or as products of transpositions in $S$ or $T$.  We will abbreviate transpositions by $t_{ij} = (ij)$ and adjacent transpositions by $s_i = (i \, i+1)$.

In what follows, we will give two different factorizations of the \emph{diagonal sum}, i.e., $\sum_{w \in S_n} w$, in the group algebra $\mathbb{Z}[S_n]$. We then examine how two pairs of statistics, $(\inv,\m)$ and $(\sor,\cy)$, behave under the factorizations. Because both factorizations lead to the expression \[ S_n(q,t) := \prod_{i=1}^{n} (t+[i]_q-1),\] we will conclude that the joint distribution of the pairs of the statistics are the same. We refer $\inv$ and $\sor$ as \emph{Mahonian statistics}, $\m$ and $\cy$ as \emph{Stirling statistics}.

\subsection{Mahonian statistics}\label{sec:mahA}

Given a permutation $w \in S_n$, the minimal number of adjacent transpositions needed to express $w$ is called its \emph{length}, $\ell(w)$. An \emph{inversion} of $w$ is a pair $i < j$ such that $w_i > w_j$. Let $\inv(w)$ denote the number of inversions of $w$. It is well-known that the number of inversions in a permutation is equal to its length: $\inv(w) = \ell(w)$. For example, $\ell(312) = 2$ since $312=s_2s_1$, but also $312$ has two inversions since $3>1$ and $3>2$. Any statistic with the same distribution as inversion number is called \emph{Mahonian}.

\begin{table}
\begin{tabular}{ c | c | c}
 & $w = 6372451$ & $w = 3715246$\\
 \hline
  & & \\
 $ \inv(w) =\ell(w) $ & 14 & 9\\
  & & \\
  & & \\
 $\sor(w)$ & 18 & 14 \\
  & & \\
 \hline
  & & \\
 $\cy(w) = n-\ell'(w)$ & 1 & 2\\
  & & \\
 $\m(w)$ & 1 & 4\\
  & & \\
\end{tabular}
\caption{Examples of inversion number versus sorting index, and $\cy(w)$ versus $\m(w)$.}\label{tab:stat}
\end{table}

We now define the \emph{sorting index} of a permutation.  Given a permutation $w \in S_n$, there is a unique expression \[w = t_{i_1 j_1}\cdots t_{i_k j_k}\]  as a product of transpositions with $j_1 <\cdots < j_k$. The sorting index, $\sor(w)$, is given by \[ \sor(w) = \sum_{s=1}^k j_s - i_s.\] The transpositions in this factorization are precisely the transpositions used in the ``straight selection sort" algorithm, which we now describe. Using a transposition we first move the largest number that is out of order to the end, then we move the next largest number to its proper place, and so on. (See Knuth \cite[5.2.3]{K}.) For example, if $w = 2431756$, we have \[ 2431\mathbf{7}56 \xrightarrow{(57)} 2431\mathbf{6}57 \xrightarrow{(56)} 2\mathbf{4}31567 \xrightarrow{(24)} \mathbf{2}134567 \xrightarrow{(12)} 1234567, \] and so \[ w = t_{12}t_{24}t_{56}t_{57}, \qquad \sor(w) = (2-1)+(4-2) + (6-5) + (7-5) = 6.\] The sorting index, then, is the sum of the distances that all the letters had to move during the sorting. We will show that this statistic has the same distribution as inversion number.

\subsection{Stirling statistics}

For $w \in S_n$, the \emph{reflection length} of $w$, $\ell'(w)$, is the minimal number of reflections (in $T$) needed to express $w$. Let $\cy(w)$ denote the number of cycles of $w$. Reflection length and cycle number are related by $\cy(w) = n-\ell'(w)$. 

For $w \in S_n$ define the following statistic:
\[ \m(w) =  n-|\{ i : w_i > w_j \mbox{ for some } j > i\}| = |\{ i : w_i < w_j \mbox{ for all } i < j\}|,\] that is, $\m(w)$ is the number of letters of $w$ that are right-to-left minima. For example, $w = 2413756$ has $\m(w) =4$.  We will see that $\m$ has the same distribution as $\cy$.

\subsection{Factorizations of the diagonal sum}\label{sec:Afac}

One of the simplest ways to generate all permutations of $n$ is defined recursively. For each permutation of $n-1$, simply insert the letter $n$ in all possible positions. We now describe a way to encode this idea algebraically. Define elements $\Psi_i$ of the group algebra of $S_n$ as: $\Psi_1 = 1 + s_1$, $\Psi_2 = 1 + s_2 + s_2s_1 = 1+ s_2\cdot\Psi_1$, and in general for $i\geq 1$, $\Psi_i =1 + s_i\cdot \Psi_{i-1}$. It should be clear that each term in $\Psi_i$ is a reduced expression in $S_{i+1}$. The following proposition shows how the product of these terms, $1\leq i < n$, produces reduced expressions for all elements of $S_n$.

\begin{prp}\label{prp:prodA}
For $n\geq 2$ we have \[ \Psi_{1}\Psi_{2}\cdots \Psi_{n-1}= \sum_{w\in S_n} w.\]
\end{prp}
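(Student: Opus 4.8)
The plan is to argue by induction on $n$, peeling off the rightmost factor $\Psi_{n-1}$ and reading multiplication by it as the operation of inserting the letter $n$ into a word on $\{1,\ldots,n-1\}$. First I would unwind the recursion $\Psi_i=1+s_i\Psi_{i-1}$ to record the explicit expansion
\[ \Psi_i = 1 + s_i + s_is_{i-1} + \cdots + s_is_{i-1}\cdots s_1, \]
so that $\Psi_i$ is the sum of the $i+1$ elements $1, s_i, s_is_{i-1}, \ldots, s_is_{i-1}\cdots s_1$. In particular the full product $\Psi_1\cdots\Psi_{n-1}$ expands into $2\cdot 3\cdots n = n!$ terms, and the entire content of the proposition is that these terms are exactly the $n!$ elements of $S_n$, each occurring once and with no cancellation.

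For the base case $n=2$ we have $\Psi_1 = 1+s_1 = 12+21 = \sum_{w\in S_2} w$. For the inductive step, note that $\Psi_1\cdots\Psi_{n-2}$ involves only $s_1,\ldots,s_{n-2}$ and so is supported on the parabolic subgroup of $S_n$ fixing $n$, which we identify with $S_{n-1}$; by the induction hypothesis it equals $\sum_{u\in S_{n-1}} u$. It then remains to show
\[ \Big(\sum_{u\in S_{n-1}} u\Big)\Psi_{n-1} = \sum_{w\in S_n} w. \]
I would establish this one $u$ at a time: writing a fixed $u\in S_{n-1}$ as the word $u_1\cdots u_{n-1}\,n$, I claim the $n$ products
\[ u,\quad us_{n-1},\quad us_{n-1}s_{n-2},\quad \ldots,\quad us_{n-1}s_{n-2}\cdots s_1 \]
are precisely the $n$ permutations obtained by inserting $n$ into each of the $n$ available slots of $u_1\cdots u_{n-1}$.

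The crux --- and the step demanding the most care --- is this positional computation. Using the convention that right multiplication by $s_j$ transposes the entries in positions $j$ and $j+1$, one checks that multiplying by $s_{n-1}s_{n-2}\cdots s_{n-k}$ slides the letter $n$ leftward from position $n$ into position $n-k$ while preserving the relative order of $u_1,\ldots,u_{n-1}$. Granting this, the $n$ terms arising from a fixed $u$ are distinct and all map to $u$ under deletion of $n$, while terms from distinct $u$ are separated by that same deletion; since every $w\in S_n$ is obtained uniquely by inserting $n$ into the word gotten by deleting $n$ from $w$, the assignment from choices of factors to permutations is a bijection and each $w$ occurs with coefficient exactly $1$. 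As a bonus, inserting $n$ into position $n-k$ creates exactly $k$ new inversions, so each term is a reduced expression of length $\ell(u)+k$, which confirms the remark made just before the proposition.
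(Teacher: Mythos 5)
Your proof is correct and follows essentially the same route as the paper's: induction on $n$, identifying $S_{n-1}$ with the permutations in $S_n$ fixing $n$, and reading right multiplication by the terms of $\Psi_{n-1}$ as inserting the letter $n$ into each of the $n$ possible positions, with distinctness of the resulting terms giving coefficient $1$ throughout. The additional details you supply --- the explicit expansion of $\Psi_i$, the careful positional computation for $us_{n-1}\cdots s_{n-k}$, and the observation that each term is a reduced expression --- only make explicit what the paper leaves implicit.
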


\begin{proof}
Clearly $\Psi_1 = \sum_{w\in S_{2}} w$. For induction, suppose $\Psi_1\cdots \Psi_{n-2}= \sum_{w\in S_{n-1}} w$. We can identify the elements of $S_{n-1}$ with the set \[ \{ w \in S_n :  w(n) = n\}.\] (In fact, written as products of the simple transpositions above, they are identical.) Given such an element $w = w_1 \cdots w_{n-1}n$, the terms in $\Psi_{n-1}$ dictate the location of $n$:
\begin{align*}
w\cdot \Psi_{n-1} &= w + w s_{n-1} + w s_{n-1}s_{n-2} + \cdots w s_{n-1} s_{n-2}\cdots s_1 \\
&= w_1 \cdots w_{n-1}n + w_1 \cdots w_{n-2}nw_{n-1} + \cdots + nw_1\cdots w_{n-1}.
\end{align*}
It is clear moreover that $w \cdot\Psi_{n-1} = v\cdot \Psi_{n-1}$ if and only if $w = v$. We have \[ \Psi_1\cdots \Psi_{n-2}\Psi_{n-1}= \sum_{w\in S_n, w(n) = n} w\cdot\Psi_{n-1} = \sum_{u \in S_n} u,\] as desired.
\end{proof}

We can now state the following corollary to Proposition \ref{prp:prodA}. For $t=1$, this is Equation \eqref{eq:mah}. 

\begin{cor}\label{cor:Ainv}
For $n\geq 1$ we have
\[S_n(q,t) = \prod_{i=1}^{n} (t + [i]_q-1) = \sum_{w \in S_n} q^{\inv(w)}t^{\m(w)}.\]
\end{cor}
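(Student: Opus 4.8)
The plan is to extract the generating function identity directly from the factorization $\Psi_1 \Psi_2 \cdots \Psi_{n-1} = \sum_{w \in S_n} w$ of Proposition \ref{prp:prodA}, by assigning a $q,t$-weight to each term of each $\Psi_i$ and tracking how the two statistics behave. By the proposition, expanding the product produces exactly one reduced word for each $w \in S_n$, namely $w = u_1 u_2 \cdots u_{n-1}$ where $u_i$ is the term chosen from $\Psi_i$; the terms of $\Psi_i$ are $1, s_i, s_i s_{i-1}, \ldots, s_i s_{i-1}\cdots s_1$, of lengths $0, 1, \ldots, i$. Since this word is reduced, $\inv(w) = \ell(w) = \sum_{i=1}^{n-1} \ell(u_i)$, so the exponent of $q$ splits as a sum over the factors.

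The real content is to read $\m(w)$ off the same factorization. I claim that $\m(w) = 1 + \#\{\, i : u_i = 1\,\}$, i.e.\ $\m$ counts the initial letter plus one for each identity factor. To prove this I would follow the inductive insertion picture behind Proposition \ref{prp:prodA}: build $w$ from the one-letter permutation by successively inserting $2, 3, \ldots, n$, where inserting the new largest letter $i+1$ via the term $u_i \in \Psi_i$ places it at the end when $u_i = 1$ and strictly before the end otherwise. The key observation is that inserting the current maximum at the end creates exactly one new right-to-left minimum (the maximum itself) and disturbs none of the existing ones, since every smaller letter still lies below the new maximum sitting to its right; whereas inserting the maximum before the end produces a letter that is \emph{not} a right-to-left minimum and again leaves all previous right-to-left minima intact, because the letters to its left only acquire a larger element to their right while the letters to its right are untouched. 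Hence each identity factor contributes $+1$ to $\m$, each non-identity factor contributes $0$, and the initial letter accounts for the constant $1$.

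Granting these two observations, the generating function factors over the independent choices of the $u_i$:
\[ \sum_{w \in S_n} q^{\inv(w)} t^{\m(w)} = t \prod_{i=1}^{n-1} \Bigl( \sum_{u_i \in \Psi_i} q^{\ell(u_i)} t^{[u_i = 1]} \Bigr) = t \prod_{i=1}^{n-1}\bigl( t + q + q^2 + \cdots + q^i\bigr), \]
since within $\Psi_i$ the identity contributes $t$ and the length-$k$ term contributes $q^k$. Recognizing $t + q + \cdots + q^i = t + [i+1]_q - 1$ and reading the leading $t$ as the factor $t + [1]_q - 1$ then yields $\prod_{j=1}^n (t + [j]_q - 1) = S_n(q,t)$.

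I expect the main obstacle to be the $\m$-bookkeeping of the middle paragraph: one must verify carefully that inserting the running maximum never destroys a previously created right-to-left minimum, so that the per-factor contributions really are additive and independent. Everything else — the additivity of $\inv$ from reducedness and the factoring of the sum — is routine once that correspondence is pinned down. As a sanity check I would confirm the two extremes: the identity $12\cdots n$ (all factors trivial, $\m = n$) and the reverse $n\cdots 21$ (no trivial factors, $\m = 1$).
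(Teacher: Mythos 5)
Your proof is correct and takes essentially the same route as the paper: both extract the identity from the factorization in Proposition \ref{prp:prodA} together with the key observation that inserting the maximal letter at the end creates a right-to-left minimum and no inversions, while inserting it with $k>0$ letters to its right contributes $q^k$, creates no right-to-left minimum, and leaves all existing minima intact. The only difference is organizational: you expand the product globally via the pointwise formulas $\inv(w)=\sum_i \ell(u_i)$ and $\m(w)=1+\#\{i : u_i=1\}$, whereas the paper runs an induction on $n$ using multiplicativity of the linear map $\psi(w)=q^{\inv(w)}t^{n-\m(w)}$.
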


This expression for the generating function of $(\inv,\m)$ can be seen as a special case of Theorem 1.2 of \cite{FH2005} (see Remark 2, page 17).

\begin{proof}
We find it convenient to prove an equivalent factorization, namely, \[t^nS_n(q,1/t) = \prod_{i=1}^{n} (1+t[i]_q - t) = \sum_{w \in S_n} q^{\inv(w)}t^{n-\m(w)}.\]

Define the linear map $\psi: \mathbb{Z}[S_n] \to \mathbb{Z}[q,t]$ given by $\psi(w) = q^{\inv(w)}t^{n-\m(w)}$. By construction, $\psi(\Psi_i) = (1 + qt + q^2t + \cdots + q^it) = (1+t[i+1]_q - t)$. Suppose for induction that \[\psi(\Psi_1\cdots \Psi_{n-2}) = \psi(\Psi_1)\cdots \psi(\Psi_{n-2}) = \sum_{w \in S_{n-1}} q^{\inv(w)}t^{\m(w)}.\] It suffices to show that the following holds: \[ \psi(\Psi_1\cdots \Psi_{n-2})\psi(\Psi_{n-1}) = \psi(\Psi_1\cdots\Psi_{n-2}\Psi_{n-1}).\] 

Following the proof of Proposition \ref{prp:prodA}, we see that $\Psi_{n-1}$ has the effect of positioning $n$ in a permutation. If $n$ is placed in the rightmost position it participates in no inversions and is a right-to-left minimum. If $n$ is inserted so that there are $i>0$ letters to its right, then because it is the greatest letter it can never be a right-to-left minimum, and it participates in $i$ inversions. Thus it contributes weight $q^it$. For a permutation $w = w_1 \cdots w_{n-1} n$, we have:
\begin{align*}
\psi(w\cdot \Psi_{n-1}) &= \psi( w_1 \cdots w_{n-1}n) + \psi( w_1 \cdots w_{n-2}nw_{n-1}) + \cdots + \psi(nw_1\cdots w_{n-1})\\
&=\psi(w) + qt\psi(w) + \cdots + q^{n-1} t\psi(w)\\
&=\psi(w)(1 + t[n]_q -t) = \psi(w)\psi(\Psi_{n-1}).
\end{align*}
Thus, 
\begin{align*}
\psi(\Psi_1\cdots\Psi_{n-2}\Psi_{n-1}) &= \psi\left(\sum_{w \in S_n, w(n) = n} w\cdot\Psi_{n-1} \right) \\
&= \sum_{w \in S_n, w(n) = n} \psi(w\cdot\Psi_{n-1}) \\
&= \psi(\Psi_{n-1})\sum_{w \in S_n, w(n) = n} \psi(w)\\
&=\psi(\Psi_{n-1})\psi(\Psi_1\cdots \Psi_{n-2}),
\end{align*}
as desired.
\end{proof}

We now present a different factorization of the diagonal sum. Intuitively, the procedure it describes is: given a permutation of $n-1$, put letter $n$ in any position $i = 1,\ldots, n-1$, and move the letter that was in position $i$ to the end. Let $\Phi_1 = 1 + t_{12}$, $\Phi_2 = 1 + t_{23}+ t_{13}$, and for $j \geq 2$ let $\Phi_j = 1+\sum_{i\leq j} t_{i\, j+1}$. The following proposition shows that the product of these terms produces a minimal length product of reflections for each element of $S_n$.

\begin{prp}\label{prp:prodA2}
For $n\geq 2$, we have \[ \Phi_1\Phi_2\cdots \Phi_{n-1}= \sum_{w\in S_n} w.\]
\end{prp}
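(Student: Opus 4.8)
The plan is to mirror the inductive argument used for Proposition \ref{prp:prodA}, replacing the insert-by-adjacent-transpositions picture with the transposition action encoded by the $\Phi_j$. The base case $n=2$ is immediate, since $\Phi_1 = 1 + t_{12} = \sum_{w\in S_2} w$. For the inductive step I would assume $\Phi_1\cdots\Phi_{n-2} = \sum_{w\in S_{n-1}} w$ and, exactly as in the earlier proof, identify $S_{n-1}$ with $\{w\in S_n : w(n)=n\}$, so that the hypothesis reads $\Phi_1\cdots\Phi_{n-2} = \sum_{w\in S_n,\, w(n)=n} w$. It then remains to understand the effect of right-multiplying this sum by the single factor $\Phi_{n-1} = 1 + \sum_{i=1}^{n-1} t_{i,n}$, and to show that the products exhaust $S_n$ without repetition.

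The key computation is to identify what $w\cdot t_{i,n}$ does to a permutation $w = w_1\cdots w_{n-1}n$ fixing $n$. Using the same positional convention as in Proposition \ref{prp:prodA}, right multiplication by $t_{i,n}$ swaps the entries in positions $i$ and $n$; since $w_n = n$, this produces $w_1\cdots w_{i-1}\, n\, w_{i+1}\cdots w_{n-1}\, w_i$. In words, it inserts $n$ into position $i$ and pushes the displaced letter $w_i$ to the end, which is precisely the procedure described in the surrounding text, while the identity term of $\Phi_{n-1}$ simply leaves $n$ in position $n$. Thus $w\cdot\Phi_{n-1}$ is the sum of the $n$ permutations obtained by placing $n$ in each of the positions $1,2,\ldots,n$, each with a well-defined remaining arrangement.

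What remains is to check that the assignment sending a pair (a permutation $w$ with $w(n)=n$ together with a chosen term of $\Phi_{n-1}$) to the corresponding product is a bijection from the $(n-1)!\cdot n = n!$ such pairs onto $S_n$. I would verify this by exhibiting the inverse. Given $u\in S_n$, locate the position $i = u^{-1}(n)$ of the letter $n$. If $i=n$, then $u$ fixes $n$ and arises uniquely from the identity term applied to $w=u$. If $i<n$, then setting $w = u\cdot t_{i,n}$ recovers a permutation with $w(n) = u(i) = n$, and since $t_{i,n}$ is an involution we have $u = w\cdot t_{i,n}$; the position of $n$ forces the choice of term, after which $w$ is determined. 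This simultaneously gives surjectivity and injectivity, so $\sum_{w(n)=n} w\cdot\Phi_{n-1} = \sum_{u\in S_n} u$, completing the induction.

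I expect the only point requiring genuine care to be the bijection in the last paragraph: one must confirm that the position of $n$ in $w\cdot t_{i,n}$ is exactly $i$ (and is $n$ for the identity term), so that the location of $n$ in $u$ unambiguously determines which term of $\Phi_{n-1}$ was used and the fiber over each $u$ is a single pair. Everything else is a direct transcription of the argument for Proposition \ref{prp:prodA}, so the main obstacle is careful bookkeeping of the positional action rather than any real difficulty.
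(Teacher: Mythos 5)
Your proposal is correct and follows essentially the same route as the paper: the same induction, the same identification of $S_{n-1}$ with $\{w \in S_n : w(n) = n\}$, and the same computation that $w \cdot t_{i,n}$ inserts $n$ into position $i$ and sends $w_i$ to the end. The only difference is that you make the injectivity/surjectivity explicit by exhibiting the inverse map via the position of $n$, whereas the paper simply asserts that the products $w\cdot\Phi_{n-1}$ are distinct and exhaust $S_n$; this is a welcome bit of extra care, not a different argument.
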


\begin{proof}
Obviously the formula holds for $n=2$. For induction, suppose $\Phi_1\cdots \Phi_{n-2} = \sum_{w \in S_{n-1}} w$. We observe that $\Phi_{n-1}$ is  $1$ plus the sum of all reflections involving the letter $n$. Thus, for $w = w_1 \cdots w_{n-1} n$ we have
\begin{align*}
w\cdot \Phi_{n-1} &= w + w t_{n-1\,n}+ wt_{n-2\,n}  + \cdots + wt_{1 n} \\
&= w_1 \cdots w_{n-1} n+ w_1 \cdots n w_{n-1} + w_1 \cdots n w_{n-1} w_{n-2} + \cdots + n w_2 \cdots w_{n-1} w_1.
\end{align*}
From these descriptions it is clear that $w\cdot \Phi_{n-1} = v\cdot\Phi_{n-1}$ if and only if $w=v$, and so we have the desired result: \[ \Phi_1\cdots\Phi_{n-2}\Phi_{n-1} = \sum_{w \in S_n, w(n) = n} w\cdot\Phi_{n-1} = \sum_{w \in S_n} w.\]
\end{proof}

By looking at the combinatorics of the factorization in Proposition \ref{prp:prodA2}, we have the following.

\begin{cor}
For $n\geq 1$, we have \[S_n(q,t) = \prod_{i=1}^n (t+[i]_q - 1) = \sum_{w \in S_n} q^{\sor(w)}t^{\cy(w)}.\] 
\end{cor}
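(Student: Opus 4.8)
The plan is to mirror the structure of the proof of Corollary \ref{cor:Ainv}, but now tracking the pair $(\sor,\cy)$ against the factorization $\Phi_1\cdots\Phi_{n-1}$ from Proposition \ref{prp:prodA2}. First I would introduce the linear map $\phi:\mathbb{Z}[S_n]\to\mathbb{Z}[q,t]$ defined on the basis by $\phi(w)=q^{\sor(w)}t^{\cy(w)}$. The induction hypothesis will be that $\phi(\Phi_1\cdots\Phi_{n-2})=\sum_{w\in S_{n-1}}q^{\sor(w)}t^{\cy(w)}=S_{n-1}(q,t)$, and the goal is to show $\phi(\Phi_1\cdots\Phi_{n-1})=S_{n-1}(q,t)\cdot(t+[n]_q-1)$, which then yields $S_n(q,t)$ and closes the induction.

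The crux is to compute the effect of right-multiplication by $\Phi_{n-1}$ on a fixed permutation $w=w_1\cdots w_{n-1}n\in S_n$ fixing $n$, and to see how each of the $n$ resulting terms changes $\sor$ and $\cy$ relative to $w$. The key observation is that $\Phi_{n-1}=1+\sum_{i\le n-1}t_{i\,n}$, and that the factorization of $w\cdot t_{i\,n}$ as a product of transpositions with increasing larger-indices is obtained from that of $w$ simply by appending $t_{i\,n}$: since $w$ fixes $n$, every transposition in its canonical factorization has larger index at most $n-1$, so appending $t_{i\,n}$ (with larger index $n$) preserves the increasing-index condition and hence gives the canonical factorization of $w\cdot t_{i\,n}$. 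Therefore $\sor(w\cdot t_{i\,n})=\sor(w)+(n-i)$ for $i\le n-1$, while the identity term leaves $\sor$ unchanged. For the cycle count, $w$ has $n$ as a fixed point, so $\cy(w)$ counts $n$ as its own singleton cycle; multiplying by $t_{i\,n}$ merges $n$ into the cycle containing $i$, decreasing the cycle count by one, whereas the identity term keeps $n$ a fixed point. Thus the identity term contributes weight $t\cdot\phi(w)$ (the factor $t$ accounting for $n$'s singleton cycle), and each term $w\cdot t_{i\,n}$ for $i=1,\ldots,n-1$ contributes $q^{n-i}\phi(w)$ with no extra $t$. Summing over $i$ gives

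\begin{align*}
\phi(w\cdot\Phi_{n-1})
&=\phi(w)\left(t+\sum_{i=1}^{n-1}q^{n-i}\right)\\
&=\phi(w)\left(t+q+q^2+\cdots+q^{n-1}\right)\\
&=\phi(w)\bigl(t+[n]_q-1\bigr).
\end{align*}

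With this local computation in hand, the remainder is formal and parallels the end of the proof of Corollary \ref{cor:Ainv}: since $\Phi_1\cdots\Phi_{n-1}=\sum_{w\in S_n,\,w(n)=n}w\cdot\Phi_{n-1}$ by Proposition \ref{prp:prodA2}, applying $\phi$ and using that the common factor $(t+[n]_q-1)$ pulls out gives $\phi(\Phi_1\cdots\Phi_{n-1})=(t+[n]_q-1)\sum_{w\in S_{n-1}}\phi(w)=(t+[n]_q-1)S_{n-1}(q,t)=S_n(q,t)$, completing the induction. The main obstacle I anticipate is justifying cleanly that appending $t_{i\,n}$ to the canonical increasing-index factorization of $w$ really does produce the canonical factorization of $w\cdot t_{i\,n}$, and hence that $\sor$ increments by exactly $n-i$; this rests on the uniqueness of the factorization with increasing larger-indices together with the fact that $w$ fixes $n$, so every step $i\le n-1$ is legitimately the largest new index. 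Verifying the cycle-merging behavior of $t_{i\,n}$ is routine once one notes $n$ begins as a fixed point, so the bookkeeping of $\cy$ presents no real difficulty.
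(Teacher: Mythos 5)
Your proposal is correct in substance and follows essentially the same route as the paper: define a linear map on the group algebra, use the factorization of Proposition \ref{prp:prodA2}, compute the effect of $\Phi_{n-1}$ on a permutation fixing $n$, and induct. The one real divergence is that the paper does not work with $\cy$ directly: it first passes to the equivalent identity $\prod_{i=1}^{n}(1+t[i]_q-t)=\sum_{w}q^{\sor(w)}t^{n-\cy(w)}$ and takes $\phi(w)=q^{\sor(w)}t^{\ell'(w)}$, where $\ell'(w)=n-\cy(w)$ is reflection length. That choice is not cosmetic: $\ell'$ is unchanged under the embedding $S_{n-1}\hookrightarrow S_n$ (a fixed point contributes nothing to reflection length), so $\phi(w\cdot\Phi_{n-1})=\phi(w)\phi(\Phi_{n-1})$ holds literally and the induction concerns a single map. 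Cycle count is \emph{not} stable under this embedding: for $w=w_1\cdots w_{n-1}n$, the quantity $\cy(w)$ already counts the singleton cycle $\{n\}$, so under your stated definition $\phi(w)=q^{\sor(w)}t^{\cy(w)}$ the identity term of $w\cdot\Phi_{n-1}$ contributes $\phi(w)$, not $t\cdot\phi(w)$, and your induction hypothesis should read $\phi(\Phi_1\cdots\Phi_{n-2})=t\,S_{n-1}(q,t)$ when $\Phi_1\cdots\Phi_{n-2}$ is viewed in $\mathbb{Z}[S_n]$. Your displayed computation is correct only if $\phi(w)$ there means the weight of $w_1\cdots w_{n-1}$ as an element of $S_{n-1}$; in other words you are implicitly using a family of maps $\phi_k:\mathbb{Z}[S_k]\to\mathbb{Z}[q,t]$ related by $\phi_n(w)=t\,\phi_{n-1}(w_1\cdots w_{n-1})$ for $w$ fixing $n$. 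The two off-by-$t$ readings cancel, so your final conclusion $\phi(\Phi_1\cdots\Phi_{n-1})=(t+[n]_q-1)S_{n-1}(q,t)=S_n(q,t)$ is right, but as written the intermediate identities conflict with your own definition of $\phi$; either index the maps by $n$ or follow the paper's reformulation via $\ell'$, which is exactly what that reformulation buys. On the plus side, your justification of the key increment $\sor(wt_{in})=\sor(w)+(n-i)$ --- appending $t_{in}$ to the canonical factorization of $w$ preserves the increasing-larger-index condition because $w$ fixes $n$, hence yields the canonical factorization of $wt_{in}$ by uniqueness --- is more explicit than the paper's treatment, which simply asserts the corresponding weight computation, and the same is true of your cycle-merging argument.
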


\begin{proof}
We find it convenient to prove an equivalent factorization, namely, \[t^nS_n(q,1/t) = \prod_{i=1}^{n} (1+t[i]_q - t) = \sum_{w \in S_n} q^{\sor(w)}t^{n-\cy(w)} = \sum_{w \in S_n} q^{\sor(w)}t^{\ell'(w)}.\]

Define the linear map $\phi: \mathbb{Z}[S_n] \to \mathbb{Z}[q,t]$ given by $\phi(w) = q^{\sor(w)}t^{\ell'(w)}$. It is clear that $\phi(\Phi_i) = (1 + qt + q^2t + \cdots + q^it) = (1+t[i+1]_q -t)$. It suffices to show \[ \phi(\Phi_{1}\cdots \Phi_{n-2}) \phi(\Phi_{n-1})= \phi(\Phi_1\cdots\Phi_{n-2}\Phi_{n-1}).\] Following the proof of Proposition \ref{prp:prodA2} we let $w = w_1 \cdots w_{n-1} n$. Notice that since $w_n = n$, then for $1\leq i < n$, $\ell'(w t_{in}) = \ell'(w) + 1$. Hence we have:
\begin{align*}
\phi(w\cdot\Phi_{n-1}) &= \phi(w) + \phi(w t_{n-1\, n}) + \cdots + \phi(w t_{1n})\\
&=\phi(w) + qt\phi(w) + \cdots + q^{n-1} t\phi(w)\\
&=\phi(w)(1 + t[n]_q -t) = \phi(w)\phi(\Phi_{n-1}).
\end{align*}
Thus, 
\begin{align*}
\phi(\Phi_1\cdots \Phi_{n-2}\Phi_{n-1}) &= \phi\left(\sum_{w \in S_n, w(n) = n} w\cdot\Phi_{n-1} \right) \\
&= \sum_{w \in S_n, w(n) = n} \phi(w\cdot \Phi_{n-1}) \\
&= \phi(\Phi_{n-1})\sum_{w \in S_n, w(n) = n} \phi(w)\\
&=\phi(\Phi_{n-1})\phi(\Phi_1\cdots \Phi_{n-2}),
\end{align*}
as desired.
\end{proof}

Taken together, these corollaries give our first main result.

\begin{cor}\label{cor:Adis}
The pairs of statistics $(\inv, \m)$ and $(\sor,\cy)$ are equidistributed over $S_n$:
\[ \sum_{w \in S_n} q^{\inv(w)}t^{\m(w)} = \sum_{w \in S_n} q^{\sor(w)} t^{\cy(w)}.\] In particular, $\sor$ is a Mahonian statistic, and $\m$ is a Stirling statistic.
\end{cor}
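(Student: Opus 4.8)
The plan is to obtain this statement essentially for free by combining the two corollaries that immediately precede it, since each expresses the very same polynomial $S_n(q,t)$ as a bivariate generating function, merely with respect to a different pair of statistics.

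First I would invoke Corollary \ref{cor:Ainv}, which gives
\[ \sum_{w \in S_n} q^{\inv(w)} t^{\m(w)} = \prod_{i=1}^n (t + [i]_q - 1) = S_n(q,t). \]
Next I would invoke the companion corollary above, which supplies
\[ \sum_{w \in S_n} q^{\sor(w)} t^{\cy(w)} = \prod_{i=1}^n (t + [i]_q - 1) = S_n(q,t). \]
Since the two sums equal one and the same polynomial, transitivity of equality yields the desired identity at once.

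It then remains only to read off the two specializations. Setting $t = 1$ collapses both generating functions to $\sum_{w} q^{\inv(w)} = \sum_{w} q^{\sor(w)}$, which by Equation \eqref{eq:mah} equals $(1+q)\cdots(1 + q + \cdots + q^{n-1})$; hence $\sor$ is Mahonian. Setting $q = 1$ gives $\sum_{w} t^{\m(w)} = \sum_{w} t^{\cy(w)}$, which by Equation \eqref{eq:cyc} equals $t(t+1)\cdots(t+n-1)$; hence $\m$ has the Stirling distribution of $\cy$.

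There is no genuine obstacle here: all of the combinatorial content lives in the two algebraic factorizations of the diagonal sum (Propositions \ref{prp:prodA} and \ref{prp:prodA2}) and in how the weight maps $\psi$ and $\phi$ behave under those factorizations, which the preceding corollaries have already established. The only point worth flagging is that the equidistribution obtained this way is a statement about the \emph{joint} distribution of each pair, hence strictly stronger than the two one-variable specializations taken separately; a reader seeking an explicit statistic-preserving bijection carrying $(\inv,\m)$ to $(\sor,\cy)$ would have to work harder, but no such bijection is required for the identity as stated.
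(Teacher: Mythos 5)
Your proposal is correct and matches the paper's own proof, which likewise obtains the identity simply by noting that Corollary \ref{cor:Ainv} and the corollary following Proposition \ref{prp:prodA2} both express their respective generating functions as the same polynomial $S_n(q,t)$. Your additional remarks on the $t=1$ and $q=1$ specializations (invoking Equations \eqref{eq:mah} and \eqref{eq:cyc}) correctly justify the ``in particular'' clause and require nothing beyond what the paper has already established.
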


We remark that there is a bijection that maps a permutation $w$ with $k$ cycles to a permutation $\hat{w}$ with $k$ right-to-left minima, and thus $\cy(w) = \m(\hat{w})$. (This is a variation of an idea given by Stanley \cite[Proposition 1.3.1]{St}.) We will illustrate the bijection with an example. First, we write $w$ in cycle notation, so that each cycle has its least element written last, and the cycles are in increasing order by least element. With $w = 685942317$, we write it as $w = (6281)(54973)$.  We simply remove parentheses to achieve the one-line notation for $\hat{w}$: \[ \hat{w} = 628154973.\] To reverse the process, simply insert ``)" to the right of each right-to-left minimum, and put ``(" at the far left and following any internal right parentheses. One can check with this example that $\cy(w) = 2 = \m(\hat{w})$.

While the map $w \mapsto \hat{w}$ shows that $\cy$ and $\m$ are equidistributed, it does not carry $\sor(w)$ to $\inv(w)$ (in the example above $\sor(w) = 21$ and $\inv(\hat{w}) = 17$), and so it does not give a bijective proof of Corollary \ref{cor:Adis}. It would be interesting to find a bijective proof of Corollary \ref{cor:Adis}, or to find a simple bijection that carries $\inv$ to $\sor$. While one can define such a map inductively via our factorizations, a more ``natural" description would be better.

\section{Type B}\label{sec:B}

The hyperoctahedral group $B_n$ is the set of permutations of $\{ 1, \ldots, n, \bar 1, \ldots, \bar n\}$ (where $\bar i = -i$) that are centrally symmetric, i.e., elements \[w = w_{\bar n} \cdots w_{\bar 1} w_1 \cdots w_n\] for which $w(- i) = -w(i)$. Thus $w$ is determined by  the word $w =w_1\cdots w_n$. That is, elements of $B_n$ are \emph{signed permutations}. These may be generated by the following transpositions: \[ T^B = \{ (i j) : 1\leq i < j \leq n \} \cup \{ (\bar i j) : 1 \leq i \leq j \leq n \},\] where, in order to maintain symmetry, the transposition $(ij)$  means to swap both $i$ with $j$ and $\bar i$ with $\bar j$ (provided $i\neq \bar j$). The simple transpositions we denote \[ S^B = \{ (\bar 1 1) \} \cup \{ (i \, i+1): 1\leq i \leq n-1 \},\] and these form a minimal generating set for $B_n$. As before, we abbreviate the transpositions by $t_{i j} = (i j)$ and $s_i = (i \, i+1)$, with $s_0 = (\bar 1 1)$. 

\subsection{Mahonian statistics}

For a signed permutation $w\in B_n$, the minimal number of terms in $S^B$ needed to express $w$ is called its \emph{length}, $\ell_B(w)$. As with permutations, the length of an element in $B_n$ (as a reflection group) is equal to its inversion number: $\ell_B(w) = \inv_B(w)$, which we now define. Let $N(w)$ denote the number of bars in $w_1\cdots w_n$, and define the type $B_n$ inversion number as follows:
\[ \inv_B(w) = | \{ 1\leq i < j \leq n : w(i) > w(j)\}| + |\{1\leq i < j\leq n : -w(i) > w(j) \}| + N(w).\] For example, $\inv_B( 2\bar 4 5 \bar 1\bar 3) = (3+2+1) + (2+2+1) + 3 = 14$.  By analogy with the type A case, we call any statistic with the same distribution as length a Mahonian statistic. 

We will define a ``type B" straight selection sort algorithm below. The algorithm achieves a unique factorization as a product of signed transpositions: \[ w = t_{i_1 j_1} \cdots t_{i_k j_k},\] with $0<j_1 < \cdots < j_k$. The \emph{type $B_n$ sorting index}, $\sor_B(w)$, is defined to be \[ \sor_B(w) = \sum_{s=1}^k j_s - i_s - \chi(i_s < 0).\] For example, if $w = 2 \bar 4 5 \bar 1 \bar 3 $, we have \[ 3 1 \bar 5 4 \bar 2\, 2 \bar 4 \mathbf{5} \bar 1 \bar 3 \xrightarrow{(35)} \bar 5 1 3 \mathbf{4} \bar 2\, 2 \bar 4 \bar 3 \bar 1 5 \xrightarrow{(\bar 2 4)} \bar 5 \bar 4 \mathbf{3} \bar 1 \bar 2\, 2 1 \bar 3 4 5 \xrightarrow{(\bar 3 3)} \bar 5 \bar 4 \bar 3 \bar 1 \bar 2\, \mathbf{2} 1 3 4 5 \xrightarrow{(12)} \bar 5 \bar 4 \bar 3 \bar 2 \bar 1 \, 1 2 3 4 5 \] and so: \[w = t_{12}t_{\bar 3 3} t_{\bar 2 4} t_{35} \qquad \sor_B(w) = (2-1)+ (3-(-3)-1) + (4-(-2)-1) + (5-3) = 13.\]
As before, the sorting index in some sense keeps track of the distances traveled by the elements being sorted. We will show that the type B sorting index is Mahonian over $B_n$.

\subsection{Stirling statistics}

The minimal number of transpositions in $T^B$ needed to express $w$ is the \emph{reflection length}, $\ell'_B(w)$. We know of no simple way to compute the reflection length of an element of $B_n$ (apart from sorting). It would be nice to find a formula for reflection length involving, say, the cycle structure of $w \in B_n$ (perhaps viewed as an element of $S_{2n}$). Nevertheless, the coefficients of the polynomial \[ \sum_{w \in B_n} t^{\ell'_B(w)} =(1+t)(1+3t)\cdots(1+(2n-1)t) = \sum_{k=1}^n c(n,k)t^{n-k},\] are known as the ``type B" Stirling numbers of the first kind.

Define another statistic,
\[ \mb(w) = | \{ i : w_i > |w_j| \mbox{ for some } j > i\}| + N(w).\] For example, if $w = 3 4 \bar 1 8 7\bar 6 2 5$, then $\mb(w) = 4+2 =6$, while one can check $\ell'_B(w) = 7$. We will show $\mb$ is equidistributed with reflection length.
 
\subsection{Factorizations of the diagonal sum}

Just as we can generate all of $S_n$ inductively by inserting the letter $n$ into a permutation of $n-1$ in all possible positions, so we can generate $B_n$ from $B_{n-1}$ by inserting $n$ or $\bar n$ in all possible ways. Similarly to Section \ref{sec:Afac}, define elements $\Psi_i$ of the group algebra of $B_n$ as follows: $\Psi_1 = 1 + s_0$, $\Psi_2 = 1 + s_1 + s_1s_0 + s_1 s_0 s_1 = 1 + s_1\cdot\Psi_1 + s_1 s_0 s_1$, and in general for $i> 1$ define $\Psi_i = 1 +  s_{i-1}\cdot\Psi_{i-1} + s_{i-1}\cdots s_1 s_0 s_1 \cdots s_{i-1}$.

\begin{prp}\label{prp:prodB}
We have \[ \Psi_{1}\Psi_{2}\cdots \Psi_n= \sum_{w\in B_n} w.\]
\end{prp}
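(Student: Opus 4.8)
The plan is to mimic the proof of Proposition \ref{prp:prodA} by induction on $n$, with $\Psi_n$ playing the role of the factor that inserts the largest letter---now $\pm n$---into all of its possible positions. The base case $n=1$ is immediate, since $\Psi_1 = 1 + s_0$ and $B_1 = \{1, s_0\}$ consists of exactly the two signed permutations with windows $1$ and $\bar 1$. For the inductive step I would assume $\Psi_1 \cdots \Psi_{n-1} = \sum_{w \in B_{n-1}} w$ and identify $B_{n-1}$ with $\{w \in B_n : w(n) = n\}$; by central symmetry $w(n) = n$ forces $w(\bar n) = \bar n$, so such $w$ permute only $\{\pm 1, \ldots, \pm(n-1)\}$ and, written in the generators $s_0, \ldots, s_{n-2}$, coincide with the elements of $B_{n-1}$, exactly as in the type A case. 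It then suffices to show that right multiplication by $\Psi_n$ sends each such $w = w_1 \cdots w_{n-1} n$ to the sum of the $2n$ signed permutations obtained from it by deleting the entry $n$ and reinserting $\pm n$ in an arbitrary position.

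The key computational step is to make the terms of $\Psi_n$ explicit. Unwinding the recursion $\Psi_i = 1 + s_{i-1}\cdot\Psi_{i-1} + s_{i-1}\cdots s_1 s_0 s_1 \cdots s_{i-1}$, I expect the terms of $\Psi_n$ to be exactly the $2n$ prefixes (including the empty one) of the palindromic word $s_{n-1} s_{n-2} \cdots s_1 s_0 s_1 \cdots s_{n-1}$, which has length $2n-1$. I would then read off the action of each prefix on $w = w_1 \cdots w_{n-1} n$ by right multiplication, using that right multiplication by $s_j$ with $j \geq 1$ swaps the entries in positions $j$ and $j+1$, while $s_0$ negates the entry in the first position. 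The descending prefixes $1, s_{n-1}, s_{n-1}s_{n-2}, \ldots, s_{n-1}\cdots s_1$ march $n$ leftward, placing $n$ in positions $n, n-1, \ldots, 1$; the next prefix $s_{n-1}\cdots s_1 s_0$ turns the leading $n$ into $\bar n$, and the ascending tail $s_1, s_2, \ldots, s_{n-1}$ marches $\bar n$ rightward, placing $\bar n$ in positions $1, 2, \ldots, n$. In every case the remaining letters $w_1, \ldots, w_{n-1}$ retain their relative order, so $w \cdot \Psi_n$ is precisely the sum of all $2n$ placements of $\pm n$, each occurring once.

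To finish, I would observe that $w$ is recovered from any term of $w \cdot \Psi_n$ by deleting the entry of absolute value $n$, so distinct $w$ yield disjoint collections and $w \cdot \Psi_n = v \cdot \Psi_n$ forces $w = v$. Since the map $B_n \to B_{n-1}$ that deletes $\pm n$ is $2n$-to-one and $|B_n| = 2n \cdot |B_{n-1}|$, these fibers partition $B_n$, giving $\Psi_1 \cdots \Psi_{n-1}\Psi_n = \sum_{w \in B_{n-1}} w\cdot \Psi_n = \sum_{u \in B_n} u$, as desired. The main obstacle is purely bookkeeping: verifying the explicit prefix description of $\Psi_n$ from the recursion and carefully tracking the single sign change introduced by $s_0$, which is what distinguishes this argument from the type A case and is exactly what produces both signs of the inserted letter.
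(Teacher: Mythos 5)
Your proof is correct and follows essentially the same route as the paper's: induction on $n$, identifying $B_{n-1}$ with $\{w \in B_n : w(n)=n\}$, and expanding $\Psi_n$ as the $2n$ prefixes of $s_{n-1}\cdots s_1 s_0 s_1 \cdots s_{n-1}$, which insert $n$ into every position and then $\bar n$ into every position, with injectivity of $w \mapsto w\cdot\Psi_n$ closing the argument. Your extra bookkeeping (the explicit prefix description and the $2n$-to-one counting of the deletion map) simply makes explicit what the paper leaves as ``clear from the description.''
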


\begin{proof}
Clearly $\Psi_1 = \sum_{w\in B_1} w$. For induction, suppose $\Psi_{1}\cdots \Psi_{n-1}= \sum_{w\in B_{n-1}} w$. We can identify the elements of $B_{n-1}$ with the set \[ \{ w \in B_n :  w(n) = n\}.\] Given such an element $w = w_1 \cdots w_{n-1}n$, the terms in $\Psi_n$ dictate the location and sign of $n$:
\begin{align*}
w\cdot\Psi_n &= w + ws_{n-1} + ws_{n-1}s_{n-2} + \cdots + w(s_{n-1}\cdots s_{1}) \\
& \quad + w(s_{n-1}\cdots s_{1}s_0) + w(s_{n-1}\cdots s_1s_0s_1) + \cdots + w(s_{n-1}\cdots s_1 s_0 s_1\cdots s_{n-1}) \\
&= w_1 \cdots w_{n-1}n + w_1 \cdots w_{n-2}nw_{n-1} + w_1\cdots nw_{n-2}w_{n-1} + \cdots + nw_1\cdots w_{n-1} \\
& \quad +\bar n w_1 \cdots w_{n-1} + w_1\bar n w_2 \cdots w_{n-1} + \cdots + w_1\cdots w_{n-1}\bar n.
\end{align*}
Moreover, from this description it is clear that $w\cdot\Psi_{n} = v\cdot\Psi_{n}$ if and only if $w = v$. We have \[ \Psi_{1}\cdots\Psi_{n-1}\Psi_n= \sum_{w\in B_n, w(n) = n} w\cdot\Psi_{n} = \sum_{u \in B_n} u,\] as desired.
\end{proof}

We will now describe how this product formula relates to signed permutation statistics.

\begin{cor}
For $n\geq 1$, we have \[ B_n(q,t) := \prod_{i=1}^n (1+ t[2i]_q -t) = \sum_{w \in B_n} q^{\inv_B(w)}t^{\mb(w)}.\]
\end{cor}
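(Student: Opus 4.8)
The plan is to mirror the type A argument from Corollary \ref{cor:Ainv} exactly, using the factorization in Proposition \ref{prp:prodB} together with a linear map that tracks the pair $(\inv_B, \mb)$. First I would define the linear map $\psi: \mathbb{Z}[B_n] \to \mathbb{Z}[q,t]$ by $\psi(w) = q^{\inv_B(w)} t^{n - \mb(w)/\text{?}}$; but here one must be careful, since unlike the type A case $\mb(w)$ already includes the summand $N(w)$, so the ``Stirling'' normalization is less transparent. The cleaner route is to set up the map so that the target polynomial is $t^n B_n(q, 1/t) = \prod_{i=1}^n (1 + t[2i]_q - t)$, and to find the right exponent of $t$ attached to each $w$ so that the factor contributed by inserting $\pm n$ matches $1 + t[2i]_q - t$ when $i = n$.

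The heart of the argument is the single insertion step. Following Proposition \ref{prp:prodB}, for $w = w_1 \cdots w_{n-1} n$ the element $w \cdot \Psi_n$ is a sum of $2n$ terms: the $n$ terms placing $+n$ with $0, 1, \ldots, n-1$ letters to its right, and the $n$ terms placing $\bar n$ with $0, 1, \ldots, n-1$ letters to its right. I would compute how $\inv_B$ and $\mb$ change for each of these $2n$ placements relative to $w$. For an unbarred $n$ with $i$ letters to its right, it contributes $i$ ordinary inversions and is never a right-to-left ``sortable'' letter contributing to $\mb$ unless it sits over a smaller or oppositely-signed entry; for a barred $\bar n$, it contributes to $N(w)$ (hence raises $\mb$ by one) and participates in the signed-inversion counts. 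The key computation is to check that, summed over all $2n$ placements, the generating polynomial factors as $\psi(w) \cdot (1 + t[2n]_q - t)$, i.e.\ that the $2n$ monomials in $q^{\text{inv contribution}} t^{\text{Stirling contribution}}$ add up to $1 + qt + q^2 t + \cdots + q^{2n-1} t$ after the appropriate normalization.

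Once that local identity $\psi(w \cdot \Psi_n) = \psi(w)\, \psi(\Psi_n)$ is established, the rest is formal: by induction on $n$, assuming $\psi(\Psi_1 \cdots \Psi_{n-1}) = \sum_{w \in B_{n-1}} q^{\inv_B(w)} t^{\mb(w)}$ (suitably normalized), I would write
\[
\psi(\Psi_1 \cdots \Psi_{n-1} \Psi_n) = \psi(\Psi_n) \sum_{w(n) = n} \psi(w) = \psi(\Psi_n)\, \psi(\Psi_1 \cdots \Psi_{n-1}),
\]
exactly as in the type A corollary, and conclude that $\psi(\sum_{w \in B_n} w) = \prod_{i=1}^n (1 + t[2i]_q - t)$, which is the claim.

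I expect the main obstacle to be the bookkeeping in the insertion step, specifically verifying that the $2n$ placements of $\pm n$ produce precisely the exponents $q^0, q^1, \ldots, q^{2n-1}$ (each with the correct power of $t$). The subtlety is that $\inv_B$ has three summands — plain inversions, the signed ``$-w(i) > w(j)$'' inversions, and $N(w)$ — and I must confirm that inserting $\bar n$ at a given position increments the total $\inv_B$ by the right amount; in particular the barred placements should contribute the ``top half'' $q^n, \ldots, q^{2n-1}$ while the unbarred placements contribute $q^0, \ldots, q^{n-1}$, so that together they telescope into $[2n]_q$. Getting the $\mb$ contribution right simultaneously — confirming that an inserted $\bar n$ always adds exactly $1$ to $\mb$ (via $N$) and an inserted $n$ never does — is where the definition of $\mb$ must be used carefully, and this is the step where the type B flavor genuinely differs from type A.
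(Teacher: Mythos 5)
Your overall strategy is the same as the paper's: induct using the factorization of Proposition \ref{prp:prodB}, define a linear map $\psi$ on $\mathbb{Z}[B_n]$, and verify the local identity $\psi(w\cdot\Psi_n)=\psi(w)\psi(\Psi_n)$ for $w=w_1\cdots w_{n-1}n$. However, two things in your write-up are wrong, and the second would make the key computation fail. First, the normalization you agonize over is a non-issue, and your proposed fix is not even a true identity: since $B_n(q,t)$ is \emph{defined} to be $\prod_{i=1}^n(1+t[2i]_q-t)$, writing $t^nB_n(q,1/t)=\prod_{i=1}^n(1+t[2i]_q-t)$ is false (in fact $t^nB_n(q,1/t)=\prod_i(t+[2i]_q-1)$, a different polynomial). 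No $t\mapsto 1/t$ substitution is needed here: unlike type A, where $\m$ counts right-to-left minima and one must pass to $n-\m$, the statistic $\mb$ is already the ``complementary'' statistic, and the correct map is simply $\psi(w)=q^{\inv_B(w)}t^{\mb(w)}$, with no adjustment. Your ``?'' never gets resolved, and that choice is precisely the resolution.

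Second, and more seriously, your final paragraph asserts that ``an inserted $\bar n$ always adds exactly $1$ to $\mb$ (via $N$) and an inserted $n$ never does.'' The second half is false. From $\mb(w)=|\{i: w_i>|w_j| \mbox{ for some } j>i\}|+N(w)$, an unbarred $n$ inserted with $i>0$ letters to its right satisfies $n>|w_j|$ for every such letter, so it adds exactly $1$ to $\mb$; only insertion into the last position adds $0$. This is exactly what makes the unbarred placements contribute $1+qt+q^2t+\cdots+q^{n-1}t$; under your claim they would contribute $1+q+\cdots+q^{n-1}$, and then
\[
\psi(w\cdot\Psi_n)=\bigl([n]_q + tq^n[n]_q\bigr)\psi(w)\neq \bigl(1+t[2n]_q-t\bigr)\psi(w),
\]
so the factorization at the heart of the argument would break. (For the barred placements your sketch is right but incomplete: inserting $\bar n$ with $i$ letters to its right adds $n-1-i$ plain inversions, $n-1$ signed inversions, and $1$ to $N$, for a total increment of $2n-1-i$ to $\inv_B$, which is what puts those terms in the top half $q^nt,\ldots,q^{2n-1}t$.) With these two corrections, the rest of your argument --- the formal induction step --- goes through exactly as in the paper.
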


As with Corollary \ref{cor:Ainv}, this generating function for $(\inv_B, \mb)$ is obtainable from \cite[Theorem 1.2]{FH2005}.

\begin{proof}
Define the linear map $\psi: \mathbb{Z}[B_n] \to \mathbb{Z}[q,t]$ given by $\psi(w) = q^{\inv_B(w)}t^{\mb(w)}$. It is clear that $\psi(\Psi_i) = (1 + qt + q^2t + \cdots + q^{2i-1}t) = (1+ t[2i]_q -t)$. Assume for induction that \[\psi(\Psi_{1}\cdots \Psi_{n-1}) = \psi(\Psi_{1})\cdots \psi(\Psi_{n-1}) = \sum_{w \in B_{n-1}} \psi(w).\] It suffices to show that the following holds: \[ \psi(\Psi_{1}\cdots \Psi_{n-1})\psi(\Psi_n) = \psi(\Psi_{1}\cdots \Psi_{n-1}\Psi_n).\] 

Following the proof of Proposition \ref{prp:prodB}, we see that $\Psi_n$ has the effect of positioning $n$ in a signed permutation and determining its sign. Following the characterizations of $\mb$ and $\inv_B$ above, we have,  for a signed permutation $w = w_1 \cdots w_{n-1} n$:
\begin{align*}
\psi(w\cdot \Psi_n) &= \psi( w_1 \cdots w_{n-1}n) + \psi( w_1 \cdots w_{n-2}nw_{n-1}) + \cdots + \psi(nw_1\cdots w_{n-1})\\
& \quad + \psi(\bar n w_1\cdots w_{n-1}) + \psi(w_1 \bar n \cdots w_{n-1}) + \cdots +\psi(w_1 \cdots w_{n-1}\bar n),\\
&=\psi(w) + qt\psi(w) + \cdots + q^{n-1} t\psi(w) \\
&\quad + q^n t\psi(w) + q^{n+1}t \psi(w)+ \cdots + q^{2n-1}t\psi(w),\\
&=(1 + t[2n]_q -t)\psi(w) = \psi(\Psi_n)\psi(w).
\end{align*}
Thus, 
\begin{align*}
\psi(\Psi_1\cdots\Psi_{n-1}\Psi_{n}) &= \psi\left(\sum_{w \in B_n, w(n) = n} w\cdot\Psi_n \right) \\
&= \sum_{w \in B_n, w(n) = n} \psi(w\cdot\Psi_n) \\
&= \psi(\Psi_n)\sum_{w \in B_n, w(n) = n} \psi(w)\\
&=\psi(\Psi_n)\psi(\Psi_{1}\cdots \Psi_{n-1}),
\end{align*}
as desired.
\end{proof}

We now present a different factorization of the sum $\sum_{w \in B_n} w$. Let $\Phi_1 = 1 + t_{\bar 1 1}$, $\Phi_2 = 1 + t_{12}+ t_{\bar 1 2} + t_{\bar 2 2}$, and for $j \geq 2$ let $\Phi_j = 1+\displaystyle\sum_{ j > i \geq \bar j} t_{i\, j}$.

\begin{prp}\label{prp:prodB2}
For $n\geq 1$, we have \[ \Phi_{1}\Phi_{2}\cdots \Phi_n= \sum_{w\in B_n} w.\]
\end{prp}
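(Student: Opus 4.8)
The plan is to follow exactly the inductive strategy used for Proposition~\ref{prp:prodA2}, which is the type~A analogue of this statement, and which the type~B factorization $\Phi_1\Phi_2\cdots\Phi_n$ is visibly designed to mimic. The base case $n=1$ is immediate, since $\Phi_1 = 1 + t_{\bar 1 1}$ is exactly $\sum_{w \in B_1} w$ (the identity and the sign change on the single letter). For the inductive step, I would assume $\Phi_1\cdots\Phi_{n-1} = \sum_{w \in B_{n-1}} w$ and identify $B_{n-1}$ with $\{w \in B_n : w(n) = n\}$ as in the proof of Proposition~\ref{prp:prodB}.

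The crux is to understand the combinatorial effect of right-multiplication by $\Phi_n = 1 + \sum_{n > i \geq \bar n} t_{i\,n}$. Here $\Phi_n$ is the identity plus the sum of all $2n-1$ signed transpositions in $T^B$ that move the letter $n$: these are $t_{i\,n}$ for $i \in \{1,\ldots,n-1\}$ (swapping $n$ with a positive letter) together with $t_{\bar 1\,n}, \ldots, t_{\overline{n-1}\,n}$ and $t_{\bar n\,n}$ (which, recalling the convention that $(ij)$ swaps both $i \leftrightarrow j$ and $\bar i \leftrightarrow \bar j$, and that $t_{\bar n\,n} = s_0$-type sign change, place $\bar n$ into a position). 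For $w = w_1\cdots w_{n-1}n$ I would write out $w\cdot\Phi_n$ as a sum of $2n$ signed permutations, paralleling the display in the proof of Proposition~\ref{prp:prodB}: the first $n$ terms place $n$ in each of the $n$ possible positions with a positive sign, and the remaining $n$ terms place $\bar n$ in each position. The key observation to verify is that these $2n$ words are precisely the distinct signed permutations in $B_n$ whose restriction (after deleting the $\pm n$ entry and reindexing) recovers $w_1\cdots w_{n-1}$, and hence that $w\cdot\Phi_n = v\cdot\Phi_n$ forces $w = v$.

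The main obstacle I anticipate is purely notational: correctly unwinding the symmetric-transposition convention so that I can confirm each $t_{i\,n}$ (for negative $i$) really does produce $\bar n$ sitting in a determined slot, and that the $2n$ resulting one-line words are genuinely distinct and exhaust the relevant fiber. The effect of $t_{\bar k\,n}$ on $w = w_1\cdots w_{n-1}n$ is to send $n \mapsto \bar k$ and $k \mapsto \bar n$ wherever $k$ sits among $w_1,\ldots,w_{n-1}$, so I would check carefully that as $k$ ranges over its values the letter $\bar n$ lands in each of the $n$ positions exactly once — this bookkeeping is where an error is most likely to hide. Once the injectivity of $w \mapsto w\cdot\Phi_n$ on this fiber is established, the conclusion is formal.

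\medskip

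\noindent I would then close with the standard telescoping:
\[
\Phi_1\cdots\Phi_{n-1}\Phi_n = \sum_{w \in B_n,\, w(n)=n} w\cdot\Phi_n = \sum_{u \in B_n} u,
\]
where the first equality uses the inductive hypothesis and the second uses the fact just verified, namely that $\{w\cdot\Phi_n : w(n) = n\}$ partitions $B_n$ without repetition.
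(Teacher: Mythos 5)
Your overall architecture --- induction on $n$, identifying $B_{n-1}$ with $\{w \in B_n : w(n)=n\}$, expanding $w\cdot\Phi_n$ into $2n$ terms, and telescoping --- is exactly the paper's. But the step you single out as the crux is false as stated. The $2n$ words in $w\cdot\Phi_n$ are \emph{not} the signed permutations whose deletion of the entry $\pm n$ recovers $w_1\cdots w_{n-1}$: right multiplication by $t_{i\,n}$ is a swap, not an insertion, so the letter displaced from position $i$ is moved to position $n$ (and negated when $i<0$). Concretely, for $w=12\in B_2$ one has $w\,t_{\bar 1 2}=\bar 2\,\bar 1$, and deleting $\bar 2$ leaves $\bar 1\neq 1$; already in the positive case, $w\,t_{k\,n}=w_1\cdots w_{k-1}\,n\,w_{k+1}\cdots w_{n-1}\,w_k$, whose deletion is $w$ with the letter $w_k$ moved to the end, not $w$ itself. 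The deletion-fiber description you invoke characterizes the \emph{insertion} factorization $w\cdot\Psi_n$ of Proposition \ref{prp:prodB}, which you have conflated with $w\cdot\Phi_n$; the two orbits are different $2n$-element subsets of $B_n$, and they induce different partitions. Relatedly, your description of $t_{\bar k\,n}$ as sending the \emph{values} $n\mapsto\bar k$ and $k\mapsto\bar n$ is the effect of left multiplication; the induction requires right multiplication by $\Phi_n$, which acts on \emph{positions}: $w\,t_{\bar k\,n}$ places $\bar n$ in position $k$ and $\bar{w}_k$ in position $n$.

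The gap is repairable without changing your skeleton, and the repair is what the paper's proof actually rests on. From the correct expansion
\[
w\cdot\Phi_n \;=\; w_1\cdots w_{n-1}n \;+\; \sum_{k=1}^{n-1} w_1\cdots w_{k-1}\,n\,w_{k+1}\cdots w_{n-1}\,w_k \;+\; \sum_{k=1}^{n-1} w_1\cdots w_{k-1}\,\bar n\,w_{k+1}\cdots w_{n-1}\,\bar{w}_k \;+\; w_1\cdots w_{n-1}\bar n,
\]
observe that in each term the position and sign of the letter $\pm n$ determine which transposition $t$ was applied, and then $w=(wt)\,t$ is recovered. Hence all $2n\cdot|B_{n-1}|=2^n n!=|B_n|$ products arising in the induction are pairwise distinct and therefore exhaust $B_n$; it is this recoverability-plus-counting argument (not a deletion-fiber argument) that justifies both the claim that $w\cdot\Phi_n=v\cdot\Phi_n$ forces $w=v$ and the final telescoping identity.
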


\begin{proof}
Obviously the formula holds for $n=1$. For induction, suppose $\Phi_1\cdots \Phi_{n-1} = \sum_{w \in B_{n-1}} w$. We observe that $\Phi_{n}$ is  $1$ plus the sum of all transpositions involving the letter $n$. Thus, for $w = w_1 \cdots w_{n-1} n$ we have
\begin{align*}
w\cdot\Phi_{n} &= w + wt_{n-1\,n}  + wt_{n-2\,n} + \cdots + wt_{1 n} + wt_{\bar 1 n}+ wt_{\bar 2 n} + \cdots + wt_{\bar n n} \\
&= w_1 \cdots w_{n-1} n+ w_1 \cdots n w_{n-1} + w_1 \cdots n w_{n-1} w_{n-2} + \cdots + n w_2 \cdots w_{n-1} w_1 \\
& \quad + \bar n w_2 \cdots w_{n-1} \bar{w}_1 + w_1 \bar n \cdots w_{n-1} \bar{w}_2 + \cdots + w_1 \cdots w_{n-1} \bar n.
\end{align*}
Clearly $w\cdot\Phi_{n} = v\cdot\Phi_{n}$ if and only if $w=v$, and so we have the desired result: \[ \Phi_{1}\cdots\Phi_{n-1} \Phi_n = \sum_{w \in B_n, w(n) = n} w\cdot\Phi_{n} = \sum_{w \in B_n} w.\]
\end{proof}

Next we use the factorization to obtain the statistical distribution.

\begin{cor}\label{cor:B2}
We have \[ B_n(q,t) = \sum_{w \in B_n} q^{\sor_B(w)}t^{\ell'_B(w)}.\]
\end{cor}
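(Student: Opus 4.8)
The plan is to mimic the proofs of the two preceding corollaries by introducing the linear map $\phi \colon \mathbb{Z}[B_n] \to \mathbb{Z}[q,t]$, $\phi(w) = q^{\sor_B(w)} t^{\ell'_B(w)}$, and showing that $\phi$ is compatible with the factorization of Proposition \ref{prp:prodB2}, i.e. that $\phi(\Phi_1\cdots\Phi_{n-1})\phi(\Phi_n) = \phi(\Phi_1\cdots\Phi_{n-1}\Phi_n)$. Induction on $n$ then yields $B_n(q,t) = \phi(\Phi_1\cdots\Phi_n) = \sum_{w\in B_n}\phi(w)$, which is the claim.

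First I would compute $\phi(\Phi_j)$. Every nonidentity term of $\Phi_j$ is a single reflection $t_{ij}\in T^B$, hence contributes $t^{1}$, and its sorting index is that of a one-term factorization, namely $j-i-\chi(i<0)$. For $i\in\{j-1,\dots,1\}$ this runs through $1,\dots,j-1$, while for $i=\bar m$ with $m\in\{1,\dots,j\}$ we get $\sor_B(t_{\bar m j}) = j+m-1$, running through $j,\dots,2j-1$. Thus the $q$-exponents are exactly $1,2,\dots,2j-1$, each with weight $t$, so $\phi(\Phi_j) = 1 + t(q+q^2+\cdots+q^{2j-1}) = 1+t[2j]_q-t$, matching the $j$-th factor of $B_n(q,t)$.

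The heart of the argument is the behaviour of the two statistics when a term of $\Phi_n$ is appended to $w = w_1\cdots w_{n-1}n$, so that $w(n)=n$. For the sorting index: since $n$ already occupies position $n$, the type B selection sort of $w$ never touches position $n$, so every index in the factorization of $w$ is smaller than $n$; and because $w\cdot t_{in}\cdot t_{in}=w$ already places $n$ correctly, the unique index-$n$ step of the sort of $w\cdot t_{in}$ must be $t_{in}$ itself, after which one is sorting $w$. Hence the factorization of $w\cdot t_{in}$ is that of $w$ with $t_{in}$ appended, giving $\sor_B(w\cdot t_{in}) = \sor_B(w) + (n-i-\chi(i<0))$. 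The hard part will be the reflection length, where I must show $\ell'_B(w\cdot t_{in}) = \ell'_B(w)+1$; I expect this to be the main obstacle, precisely because, unlike type A, there is no cycle formula for $\ell'_B$. The clean way around it is the standard fixed-space description of reflection length: realizing $B_n$ by signed permutation matrices on $V=\mathbb{R}^n$, one has $\ell'_B(u) = \operatorname{codim}\mathrm{Fix}(u)$, and for a reflection $r$ with root $\alpha$ one has $\ell'_B(ur)=\ell'_B(u)+1$ if and only if $\mathrm{Fix}(u)\not\subseteq\alpha^\perp$. Now $w(n)=n$ forces $e_n\in\mathrm{Fix}(w)$, whereas each $t_{in}$ occurring in $\Phi_n$ has a root with nonzero $e_n$-component (it is $e_{|i|}-e_n$ or $e_{|i|}+e_n$ according to the sign of $i$, and $e_n$ when $i=\bar n$), so $e_n\notin\alpha^\perp$ and the increment is forced to be $+1$.

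With these two facts in hand the multiplicativity is routine: for each term $x$ of $\Phi_n$ the additivity of $\sor_B$ and of $\ell'_B$ gives $\phi(w\cdot x)=\phi(w)\phi(x)$, whence $\phi(w\cdot\Phi_n)=\phi(w)\phi(\Phi_n)$, and summing over the coset representatives $\{w\in B_n : w(n)=n\}\cong B_{n-1}$ — on which $\sor_B$ and $\ell'_B$ restrict to their $B_{n-1}$ values — closes the induction exactly as in the earlier corollaries. I would isolate the reflection-length increment as a short lemma proved by the fixed-space criterion above, since that is the one genuinely new ingredient beyond the already-established pattern.
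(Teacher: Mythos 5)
Your proposal is correct and takes essentially the same route as the paper: the same linear map $\phi(w) = q^{\sor_B(w)}t^{\ell'_B(w)}$, the same multiplicativity with respect to the factorization of Proposition \ref{prp:prodB2}, and the same induction on $n$. In fact you supply more detail than the paper does at the one delicate point: the paper merely asserts that appending the transpositions of $\Phi_n$ keeps the $T$-words reduced, whereas you actually prove the increment $\ell'_B(w\,t_{in}) = \ell'_B(w)+1$ via the fixed-space (codimension) characterization of reflection length, observing that $e_n$ is fixed by $w$ but not orthogonal to the root of $t_{in}$.
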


\begin{proof}
This proof is nearly identical to previous arguments. We show that the linear map $\phi: \mathbb{Z}[B_n] \to \mathbb{Z}[q,t]$ with $\phi(w) = q^{\sor_B(w)} t^{\ell'_B(w)}$ obeys \[ \phi(\Phi_{1} \cdots \Phi_{n-1})\phi(\Phi_n) = \phi(\Phi_1 \cdots\Phi_{n-1} \Phi_n).\]

This follows by induction on $n$ and the observation that $\Phi_n$ is all transpositions involving $n$ and hence the products of transpositions generated are indeed reduced.
\end{proof}

We state our desired equidistribution result as follows.

\begin{cor}
The pairs of statistics $(\inv_B, \mb)$ and $(\sor_B, \ell'_B)$ are equidistributed over $B_n$: \[ \sum_{w \in B_n} q^{\inv_B(w)}t^{\mb(w)} = \sum_{w \in B_n} q^{\sor_B(w)}t^{\ell'_B(w)}.\] In particular, $\sor_B$ is Mahonian and $\mb$ is Stirling.
\end{cor}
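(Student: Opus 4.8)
The final statement asserts that the two pairs of statistics $(\inv_B,\mb)$ and $(\sor_B,\ell'_B)$ are equidistributed over $B_n$. The plan is to obtain this as an immediate consequence of the two preceding corollaries, both of which identify a generating function with the common polynomial $B_n(q,t) = \prod_{i=1}^n(1+t[2i]_q - t)$. Specifically, the corollary following Proposition \ref{prp:prodB} established
\[ B_n(q,t) = \sum_{w \in B_n} q^{\inv_B(w)}t^{\mb(w)}, \]
while Corollary \ref{cor:B2} established
\[ B_n(q,t) = \sum_{w \in B_n} q^{\sor_B(w)}t^{\ell'_B(w)}. \]
Since both sums equal the same bivariate polynomial $B_n(q,t)$, they are equal to each other, which is exactly the claimed identity. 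No new combinatorial argument is required beyond transitivity of equality.

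From the resulting identity one reads off the two ``in particular'' assertions by specialization. Setting $t=1$ collapses both generating functions to univariate polynomials in $q$; on the left this gives $\sum_{w}q^{\inv_B(w)}$, the length generating function of $B_n$, and on the right it gives $\sum_{w}q^{\sor_B(w)}$. Their equality shows $\sor_B$ has the same distribution as $\inv_B = \ell_B$, hence is Mahonian by the definition given above. Symmetrically, setting $q=1$ reduces both sides to polynomials in $t$; the left becomes $\sum_{w}t^{\mb(w)}$ and the right becomes $\sum_{w}t^{\ell'_B(w)}$, the reflection length generating function whose coefficients are the type B Stirling numbers of the first kind. Their equality shows $\mb$ is a Stirling statistic.

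There is no genuine obstacle in this final step, since the substance of the theorem lies entirely in the two factorization corollaries that precede it. The only point requiring a modicum of care is that each specialization is legitimate: both $t=1$ and $q=1$ are valid substitutions into the polynomial identity over $\mathbb{Z}[q,t]$, so the equidistribution of the pairs genuinely forces the equidistribution of each coordinate marginal. One should also note, for completeness, that the type B length generating function is known to factor as $\prod_{i=1}^n [2i]_q = B_n(q,1)$, confirming that the $q$-marginal is the familiar Mahonian distribution for $B_n$ and thereby justifying the terminology ``Mahonian'' applied to $\sor_B$.
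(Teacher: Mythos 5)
Your proposal is correct and matches the paper's approach exactly: the paper also obtains this corollary immediately by combining the two preceding corollaries, both of which identify their generating functions with the common polynomial $B_n(q,t)$. Your added remarks on the $t=1$ and $q=1$ specializations justifying the ``Mahonian'' and ``Stirling'' terminology are a harmless elaboration of what the paper leaves implicit.
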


\section{type D}\label{sec:D}

The type $D_n$ Coxeter group can be defined as the subgroup of $B_n$ ($n\geq 4$) given by elements with an even number of minus signs. The minimal generating set for $D_n$ is \[ S^D := \{ (\bar 1 2) \} \cup \{ (i\, i+1) : 1\leq i \leq n-1\}.\] We denote these simple transpositions by $s_i = (i \, i+1)$ and $s_{\bar 1} = (\bar 1 2)$.

We will use the same notation for transpositions as in the type $B_n$ case: \[ T^D = \{ (i j) : 1\leq i < j \leq n\} \cup \{ (\bar i j) : 1\leq i \leq j \leq n \},\] with $t_{ij} = (ij)$. However, we caution that as opposed to the type B case, the transpositions $t_{\bar i i}$ are \emph{not} reflections in $D_n$. (Recall that a reflection in a Coxeter system $(W,S)$ is any conjugate of an element of $S$, i.e., $t$ is a reflection if and only if $t = wsw^{-1}$ for some $s \in S$ and $w \in W$.) In particular, in $D_n$ we have $t_{\bar 1 1} = 1$ and for $1<i\leq n$ and $w=w_1\cdots w_n$, \[w t_{\bar i i} = \bar{w}_1 w_2 \cdots w_{i-1} \bar{w}_i w_{i+1}\cdots w_n.\] 

The type D sorting index will be defined in terms of a certain factorization by transpositions in $T^D$, but, as not all elements of $T^D$ are reflections, it does not share the same close relationship with reflection length that was seen in types A and B. Thus we consider only the Mahonian statistics in this case.

\subsection{Mahonian statistics}

The length of an element $w \in D_n$, denoted $\ell_D(w)$, is the minimal number of adjacent transpositions from $S^D$ needed to express $w$. The type $D_n$ inversion number is defined as: \[\inv_D(w) = | \{ 1\leq i < j \leq n : w(i) > w(j)\}| + |\{1\leq i < j\leq n : -w(i) > w(j) \}|.\] For example, $\inv_D(\bar 3 24 \bar 5 1) = (1+2+2) + (3+1+1+1) = 11$. We have $\ell_D(w) = \inv_D(w)$ for $w \in D_n$.

Although it is not necessarily a product of reflections, there is nonetheless a unique factorization \[ w = t_{i_1 j_1} \cdots t_{i_k j_k},\] with $1<j_1 < \cdots < j_k$, and the \emph{type $D_n$ sorting index}, $\sor_D(w)$, is defined to be \[ \sor_D(w) = \sum_{s=1}^k j_s - i_s - 2\cdot\chi(i_s < 0).\] The sorting index of type D differs from the type B version in two key ways. First, we use only the transpositions $(i j)$ with $j \geq 2$ (in particular $(\bar 1 1)$ is not used), and second, when $i$ is negative the distance between $i$ and $j$ is $j-i-2$. This is intuitively nice when we think of elements of $D_n$ not as symmetric chains, but as symmetric posets with a ``fork"  in the middle: \[ w_{\bar n} \cdots w_{\bar 2} \begin{array}{c} w_1 \\ w_{\bar 1} \end{array} w_2 \cdots w_n.\] For example, consider the following sorting of the element $w=\bar 3 24 \bar 5 1 \in D_5$. We have \[ \bar 1 \mathbf{5} \bar 4 \bar 2 \begin{array}{c}\bar 3 \\ 3 \end{array} 24 \bar 5 1 \xrightarrow{(\bar 4 5)} \bar 5 1 \bar 4 \bar 2 \begin{array}{c}\bar 3 \\ 3 \end{array} 2 \mathbf{4} \bar 1 5 \xrightarrow{(34)} \bar 5 \bar 4 1 \bar 2 \begin{array}{c}\bar 3 \\ \mathbf{3} \end{array} 2  \bar 1 4 5 \xrightarrow{(\bar 1 3)} \bar 5 \bar 4 \bar 3 \bar 2 \begin{array}{c}1 \\ \bar 1 \end{array} 2 3 4 5 \] and so $\sor_D(w) = (5-(-4)- 2) + (4 -3) + (3-(-1)-2) = 10.$

\subsection{Factorizations of the diagonal sum}

As with ordinary and signed permutations, there are at least two natural ways to generate elements of $D_n$ given the elements of $D_{n-1}$. For the first of these, define $\Psi_1 = 1+s_1 + s_{\bar 1} + s_1s_{\bar 1}$, and for $i \geq 2$, define $\Psi_i = 1 + s_i \Psi_{i-1} + s_i \cdots s_2 s_1 s_{\bar 1} s_2 \cdots s_i.$ We have the following factorization of the diagonal sum.

\begin{prp}\label{prp:prodD}
For $n\geq 4$, we have 
\begin{equation}\label{eq:prodD}
 \Psi_1 \Psi_2 \cdots \Psi_{n-1} = \sum_{w \in D_n} w.
\end{equation}
\end{prp}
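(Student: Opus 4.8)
The plan is to argue by induction on $n$, in exact parallel with the proofs of Propositions \ref{prp:prodA}, \ref{prp:prodB}, and \ref{prp:prodB2}. So that the induction makes sense at small indices, I will read $D_m$, for every $m \geq 1$, as the subgroup of $B_m$ consisting of signed permutations with an even number of bars; this subgroup has order $2^{m-1}m!$ and coincides with the Coxeter group $D_m$ once $m \geq 4$. As before I identify $D_{n-1}$ with the stabilizer $\{w \in D_n : w(n) = n\}$, so a typical element is written $w = w_1\cdots w_{n-1} n$. The base case is $\Psi_1 = 1 + s_1 + s_{\bar 1} + s_1 s_{\bar 1} = \sum_{w \in D_2} w$, verified by listing the four elements $12,\ 21,\ \bar 2\,\bar 1,\ \bar 1\,\bar 2$. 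Assuming $\Psi_1 \cdots \Psi_{n-2} = \sum_{w \in D_{n-1}} w$, the identity \eqref{eq:prodD} will follow once I show that right multiplication by $\Psi_{n-1}$ carries the stabilizer $\{w : w(n)=n\}$, term by term, bijectively onto all of $D_n$.

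The first step is to expand $w\cdot\Psi_{n-1}$ explicitly. Unrolling the recursion $\Psi_i = 1 + s_i\Psi_{i-1} + s_i\cdots s_2 s_1 s_{\bar 1} s_2 \cdots s_i$ shows that $\Psi_{n-1}$ is a sum of $2n$ reduced monomials: the identity and the descending chains $s_{n-1}s_{n-2}\cdots s_p$, together with the chains that pass through $s_{\bar 1}$, namely $s_{n-1}\cdots s_2 s_{\bar 1}$ and $s_{n-1}\cdots s_1 s_{\bar 1}s_2\cdots s_q$. Using that $w\cdot s_i$ swaps positions $i,i+1$ and that $w\cdot s_{\bar 1} = \overline{w_2}\,\overline{w_1}\,w_3\cdots w_n$ swaps and negates the first two positions, I will check that these monomials act on $w = w_1\cdots w_{n-1} n$ as follows. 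The $n$ monomials not involving $s_{\bar 1}$ insert an unbarred $n$ into each of positions $1,\ldots,n$, producing $w_1\cdots w_{p-1}\, n\, w_p\cdots w_{n-1}$. The $n$ monomials involving $s_{\bar 1}$ insert a barred $\bar n$ into each of positions $1,\ldots,n$, while simultaneously negating $w_1$: one gets $\bar n\,\overline{w_1}\,w_2\cdots w_{n-1}$ when $\bar n$ lands in position $1$, and $\overline{w_1}\,w_2\cdots w_{p-1}\,\bar n\,w_p\cdots w_{n-1}$ when $\bar n$ lands in position $p\geq 2$.

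It remains to see that, as $w$ ranges over the stabilizer and the monomial ranges over $\Psi_{n-1}$, every element of $D_n$ is produced exactly once. Given $v \in D_n$, the letter $n$ occurs in $v$ either unbarred or barred, in a unique position. If unbarred, deleting it and closing the gap recovers a unique $w$ with $w(n)=n$ and the same (even) number of bars, so $w \in D_{n-1}$. If barred, I delete $\bar n$ and flip the sign of the first surviving entry (the entry then in position $1$); this recovers a unique $w$, and the sign flip is exactly what restores even parity, so again $w\in D_{n-1}$. In either case the monomial of $\Psi_{n-1}$ is pinned down by the sign of $n$ and its position, so $v \mapsto (w,\text{monomial})$ inverts the expansion above. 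The main obstacle, and the only genuine departure from the type A and type B arguments, is precisely this sign coupling: in type $B$ one inserts $\bar n$ freely, whereas in type $D$ the insertion of $\bar n$ must be tied to negating $w_1$ in order to remain in the even-signed subgroup. Checking that this coupled insertion is nonetheless a bijection onto all of $D_n$ — in particular that the recovered $w$ always has an even number of bars — is the crux; once it is established, \eqref{eq:prodD} follows exactly as in the earlier propositions.
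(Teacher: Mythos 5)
Your proof is correct, and its induction step is the same as the paper's: you expand $w\cdot\Psi_{n-1}$ for $w = w_1\cdots w_{n-1}n$ and find exactly the paper's list of terms (unbarred $n$ inserted into every position, or $\bar n$ inserted into every position coupled with negating $w_1$), then argue this insertion is a bijection onto $D_n$. Where you genuinely depart from the paper is the base case. The paper anchors the induction at $n=4$, verifying $\Psi_1\Psi_2\Psi_3 = \sum_{w\in D_4} w$ directly --- a $192$-term identity that the author suggests checking ``preferably with computer assistance.'' You instead observe that the induction step is pure signed-permutation algebra inside $B_n$ (it never uses that $D_m$ is an irreducible Coxeter group), so you may interpret $D_m$ as the even-signed subgroup of $B_m$ for all $m\geq 1$ and start the induction at the four-element group $D_2$, where $\Psi_1 = 1 + s_1 + s_{\bar 1} + s_1 s_{\bar 1}$ is checked by hand. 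Your parity bookkeeping --- that inserting $\bar n$ while flipping the sign of $w_1$ changes the number of bars by $0$ or $2$, so the coupled insertion stays in, and its inverse returns to, the even subgroup --- is exactly the point needed to justify this extension, and you also spell out the inverse map that the paper delegates to ``as in the proof of Proposition \ref{prp:prodB}.'' The trade-off: your route is self-contained and eliminates the computer check (and in fact proves the identity for all $n\geq 2$ in the even-signed sense), while the paper's route stays strictly within its stated definition of $D_n$ as a Coxeter group for $n\geq 4$ at the cost of a large finite verification.
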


\begin{proof}
One can verify (preferably with computer assistance) that $\Psi_1\Psi_2\Psi_3 = \sum_{w\in D_4} w$. For induction, we simply examine the effect that $\Psi_{n-1}$ has on an element $w = w_1\cdots w_{n-1} \in D_{n-1}$. We have:
\begin{align*}
w\cdot\Psi_{n-1} &= w + ws_{n-1} + \cdots + ws_{n-1}\cdots s_{2} + w(s_{n-1}\cdots s_2s_{1})+ w(s_{n-1}\cdots s_{2}s_{\bar 1})w \\
& \quad + w(s_{n-1}\cdots s_2s_1s_{\bar 1}) + \cdots + w(s_{n-1}\cdots s_2s_1s_{\bar 1}s_2\cdots s_{n-1}) \\
&= w_1 \cdots w_{n-1}n + w_1 \cdots w_{n-2}nw_{n-1} + \cdots + w_1n\cdots w_{n-1} + nw_1\cdots w_{n-1} \\
& \quad +\bar n \bar{w}_1 \cdots w_{n-1} + \bar{w}_1\bar n \cdots w_{n-1} + \cdots + \bar{w}_1\cdots w_{n-1}\bar n.
\end{align*}
The result now follows as in the proof of Proposition \ref{prp:prodB}.
\end{proof}

Let $\psi: \mathbb{Z}[D_n] \to \mathbb{Z}[q]$ by $\psi(w) = q^{\inv_D(w)}$, so that $\psi(\Psi_i) = (1+q+\cdots +q^{i-1} + 2q^i + q^{i+1} +\cdots + q^{2i}) = (1+q^{i})[i+1]_q.$ By applying $\psi$ to both sides of \eqref{eq:prodD}, we have the following. (The second equality follows from the fact that $[i]_q(1+q^i) = [2i]_q$.)

\begin{cor}\label{cor:D}
We have \[ D_n(q) := \sum_{w \in D_n} q^{\inv_D(w)} = \prod_{i=1}^{n-1}(1+q^i)[i+1]_q  = [n]_q\cdot\prod_{i=1}^{n-1} [2i]_q.\]
\end{cor}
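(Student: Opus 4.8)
The plan is to evaluate $\sum_{w\in D_n}q^{\inv_D(w)}$ by applying the inversion-tracking linear map $\psi\colon\mathbb{Z}[D_n]\to\mathbb{Z}[q]$, $\psi(w)=q^{\inv_D(w)}$, to the factorization $\Psi_1\Psi_2\cdots\Psi_{n-1}=\sum_{w\in D_n}w$ furnished by Proposition \ref{prp:prodD}. Everything then reduces to two tasks: showing that $\psi$ converts this product into an honest product of scalars, $\psi(\Psi_1\cdots\Psi_{n-1})=\prod_{i=1}^{n-1}\psi(\Psi_i)$, and evaluating each factor as $\psi(\Psi_i)=(1+q^i)[i+1]_q$. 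The closing rewrite $\prod_{i=1}^{n-1}(1+q^i)[i+1]_q=[n]_q\prod_{i=1}^{n-1}[2i]_q$ is then a short manipulation.

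For the multiplicativity I would induct exactly as in the proofs of Corollaries \ref{cor:Ainv} and \ref{cor:B2}, reducing to the single identity $\psi(w\cdot\Psi_{n-1})=\psi(w)\,\psi(\Psi_{n-1})$ for each $w=w_1\cdots w_{n-1}n$ with $w(n)=n$. Using the expansion of $w\cdot\Psi_{n-1}$ recorded inside the proof of Proposition \ref{prp:prodD}, I would check term by term that each summand changes $\inv_D$ by an amount independent of $w$. For the positive insertions this is the familiar count: placing $n$ with $n-p$ letters to its right creates exactly $n-p$ ordinary inversions and no new ``negative'' inversions (since $n$ is the largest letter), so these $n$ terms contribute the factor $[n]_q$.

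The genuinely type-D feature, and the step I expect to be the main obstacle, is the block of negative insertions, where $\bar n$ is placed in some position while the sign of $w_1$ is simultaneously flipped to restore the even-parity condition. The observation that makes this tractable is that $\inv_D$ records, for each pair $i<j$, both $w(i)>w(j)$ and $-w(i)>w(j)$; hence the total contribution $\chi(a>b)+\chi(-a>b)$ of a single pair is unchanged when one of its two entries has its sign reversed. Consequently the flip $w_1\mapsto\bar w_1$ alters no inversion count, and the entire change comes from the inserted $\bar n$: being the smallest letter, $\bar n$ in position $p$ contributes $2(p-1)$ inversions from the pairs to its left and $n-p$ from the pairs to its right, for a total of $p+n-2$. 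As $p$ runs from $1$ to $n$ this produces $q^{n-1}+q^n+\cdots+q^{2(n-1)}=q^{n-1}[n]_q$, so the positive and negative insertions together give $\psi(w\cdot\Psi_{n-1})=\psi(w)\bigl([n]_q+q^{n-1}[n]_q\bigr)=\psi(w)(1+q^{n-1})[n]_q$, independent of $w$. The identical bookkeeping applied to $\Psi_i$ yields $\psi(\Psi_i)=(1+q^i)[i+1]_q$.

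It then remains to verify $\prod_{i=1}^{n-1}(1+q^i)[i+1]_q=[n]_q\prod_{i=1}^{n-1}[2i]_q$. Applying the identity $[i]_q(1+q^i)=[2i]_q$ rewrites the left-hand side as $\prod_{i=1}^{n-1}[2i]_q\cdot\prod_{i=1}^{n-1}\frac{[i+1]_q}{[i]_q}$, and the second product telescopes to $[n]_q/[1]_q=[n]_q$, which completes the argument. The only point demanding genuine care is the sign-flip invariance of $\inv_D$ in the negative block; once that is isolated, the whole computation parallels the type A and type B cases verbatim.
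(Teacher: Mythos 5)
Your proposal is correct and follows essentially the same route the paper takes (and merely sketches): apply $\psi(w)=q^{\inv_D(w)}$ to the factorization of Proposition \ref{prp:prodD}, verify multiplicativity by the same induction as in types A and B, compute $\psi(\Psi_i)=(1+q^i)[i+1]_q$, and finish via $[i]_q(1+q^i)=[2i]_q$. One small caution: your blanket claim that a pair's contribution $\chi(a>b)+\chi(-a>b)$ is unchanged when \emph{either} entry's sign is reversed is false for the second entry (e.g.\ $a=1$, $b=2$ gives $0$ before and $2$ after flipping $b$); it holds only for the first entry, which is all you actually need, since the flipped letter $w_1$ is leftmost among the old letters and every pair involving the inserted $\bar n$ is sign-insensitive because $\pm n$ dominates.
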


We omit the proof as it closely follows the related argument for type B.

We now give a factorization analogous to Propositions \ref{prp:prodA2} and \ref{prp:prodB2}. Let $\Phi_j = 1+\sum_{j > i\geq \bar j} t_{ij}$ be as defined in Proposition \ref{prp:prodB2}.

\begin{prp}\label{prp:prodD2}
For $n\geq 4$, we have \begin{equation}\label{eq:prodD2} 
\Phi_2 \Phi_3 \cdots \Phi_n = \sum_{w \in D_n} w.
\end{equation}
\end{prp}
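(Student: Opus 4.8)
The plan is to prove \eqref{eq:prodD2} by induction on $n$, following the template of Proposition \ref{prp:prodB2} almost verbatim; the one genuinely new feature is that the ``diagonal'' transposition $t_{\bar n\,n}$ is not a reflection in $D_n$ and therefore acts differently than in type $B$. For the base case I would verify directly (with computer assistance, exactly as for Proposition \ref{prp:prodD}) that $\Phi_2\Phi_3\Phi_4=\sum_{w\in D_4}w$. For the inductive step, assume $n\geq 5$ and $\Phi_2\cdots\Phi_{n-1}=\sum_{w\in D_{n-1}}w$, where we identify $D_{n-1}$ with the subgroup $\{w\in D_n:w(n)=n\}$. This identification is legitimate because each $\Phi_j$ with $j\leq n-1$ involves only transpositions $t_{ij}$ whose second index is $\leq n-1$, and hence every term fixes the letter $n$.

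Next I would compute $w\cdot\Phi_n$ for a fixed $w=w_1\cdots w_{n-1}n$. Writing $\Phi_n=1+\sum_{n>i\geq\bar n}t_{in}$, the terms $t_{in}$ and $t_{\bar i\,n}$ with $1\leq i\leq n-1$ behave exactly as in Proposition \ref{prp:prodB2}: $t_{in}$ places $+n$ in position $i$ and sends $w_i$ to the end, while $t_{\bar i\,n}$ places $\bar n$ in position $i$ and sends $\bar w_i$ to the end. The one term that differs is $t_{\bar n\,n}$: rather than merely negating the last letter, in $D_n$ it negates positions $1$ and $n$, producing $\bar w_1 w_2\cdots w_{n-1}\bar n$. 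Thus $w\cdot\Phi_n$ is the sum of the $2n$ signed permutations obtained by placing $+n$ in each of the $n$ positions (pushing the displaced letter to the end) and $\bar n$ in each of the $n$ positions (for positions $1,\dots,n-1$ pushing the negated displaced letter to the end, and for position $n$ flipping the sign in position $1$).

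The crux---and the only place where type $D$ differs substantively from type $B$---is to verify that $w\mapsto w\cdot(\text{term of }\Phi_n)$ is a bijection from $D_{n-1}\times\{\text{the }2n\text{ terms}\}$ onto $D_n$. This reduces to two checks. First, each resulting word must lie in $D_n$, i.e.\ have an even number of bars: for the positive insertions this is immediate, while for a negative insertion $t_{\bar i\,n}$ (respectively $t_{\bar n\,n}$) the extra bar carried by $\bar n$ is compensated by the sign flip applied to the displaced letter $w_i$ (respectively to $w_1$), so the total parity stays even. Second, the map is injective and surjective: given $u\in D_n$, the position and sign of the unique occurrence of $\pm n$ among $u_1,\dots,u_n$ determine which term of $\Phi_n$ was used, and then $w$ is recovered uniquely by undoing that term; running the same parity count in reverse confirms that the recovered $w$ has an even number of bars and hence lies in $D_{n-1}$. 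I expect this parity bookkeeping around the anomalous diagonal term $t_{\bar n\,n}$ to be the main obstacle, precisely because it is where the non-reflection nature of $t_{\bar i\,i}$ in $D_n$ enters. Once it is settled, summing over $w\in D_{n-1}$ yields $\sum_{w\in D_{n-1}}w\cdot\Phi_n=\sum_{u\in D_n}u$, which completes the induction.
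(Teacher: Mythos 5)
Your proposal is correct and follows essentially the same route as the paper: induction on $n$ with the $n=4$ base case checked by computer, identifying $D_{n-1}$ with $\{w\in D_n: w(n)=n\}$, and expanding $w\cdot\Phi_n$ term by term, with the anomalous diagonal element $t_{\bar n\,n}$ acting as $w\mapsto \bar w_1w_2\cdots w_{n-1}\bar n$ exactly as you describe. The paper leaves the parity and bijectivity bookkeeping implicit (``from which the result follows''), so your explicit verification that each insertion preserves evenness of the number of bars is simply a fuller write-up of the same argument.
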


\begin{proof}
One can verify the claim when $n=4$. The general case follows by induction as with the earlier proofs. For consistency, we describe the effect of $\Phi_n$ on an element $w = w_1\cdots w_{n-1}n$. We have:
\begin{align*}
w\cdot\Phi_n &= w + wt_{n-1\,n}  + wt_{n-2\,n} + \cdots + wt_{1 n} + wt_{\bar 1 n}+ wt_{\bar 2 n} + \cdots + wt_{\bar n n} \\
&= w_1 \cdots w_{n-1} n+ w_1 \cdots n w_{n-1} + w_1 \cdots n w_{n-1} w_{n-2} + \cdots + n w_2 \cdots w_{n-1} w_1 \\
& \quad + \bar n w_2 \cdots w_{n-1} \bar{w}_1 + w_1 \bar n \cdots w_{n-1} \bar{w}_2 + \cdots + \bar{w}_1 \cdots w_{n-1} \bar n,
\end{align*} 
from which the result follows.
\end{proof}

Define $\phi: \mathbb{Z}[D_n] \to \mathbb{Z}[q]$ with $\phi(w) = q^{\sor_D(w)}$. By construction, $\phi(\Phi_i) = (1+q+\cdots q^{i-2} + 2q^{i-1} + q^i + \cdots +q^{2i-2}) = (1 + q^{i-1})[i]_q$. Applying $\phi$ to \eqref{eq:prodD2}, we have the following, for which we again omit the proof.

\begin{cor}\label{cor:D2}
For $n\geq 4$, we have
\[ D_n(q) = \sum_{w \in D_n} q^{\sor_D(w)} = \prod_{i=1}^{n-1} (1+q^i)[i+1]_q = [n]_q \cdot \prod_{i=1}^{n-1} [2i]_q.\]
\end{cor}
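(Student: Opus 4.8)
The plan is to imitate the proofs of Corollaries \ref{cor:Ainv} and \ref{cor:B2}: apply the linear map $\phi$ to the factorization $\Phi_2\Phi_3\cdots\Phi_n = \sum_{w\in D_n} w$ of Proposition \ref{prp:prodD2}, and show that $\phi$ is multiplicative across the factors, so that $\phi(\Phi_2\cdots\Phi_n) = \prod_{i=2}^n \phi(\Phi_i) = \prod_{i=2}^n (1+q^{i-1})[i]_q$. Since $\phi(\sum_w w) = \sum_w q^{\sor_D(w)} = D_n(q)$ and the substitution $i\mapsto i+1$ rewrites $\prod_{i=2}^n (1+q^{i-1})[i]_q$ as $\prod_{i=1}^{n-1}(1+q^i)[i+1]_q$, this yields the first claimed product. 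For the second equality I would argue purely formally, exactly as in Corollary \ref{cor:D}: using $(1+q^i)[i]_q=[2i]_q$ one has $[n]_q\prod_{i=1}^{n-1}[2i]_q = \bigl(\prod_{i=1}^{n-1}(1+q^i)\bigr)[n]_q\prod_{i=1}^{n-1}[i]_q$, and since $[1]_q=1$ the factor $[n]_q\prod_{i=1}^{n-1}[i]_q$ telescopes to $\prod_{j=2}^n[j]_q = \prod_{i=1}^{n-1}[i+1]_q$, matching the first product.

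The substance is therefore the multiplicativity, which I would prove by induction on $n$ with the computer-checked identity for $D_4$ as base. It suffices to show $\phi(\Phi_2\cdots\Phi_{n-1})\,\phi(\Phi_n) = \phi(\Phi_2\cdots\Phi_{n-1}\Phi_n)$. Following Proposition \ref{prp:prodD2}, I identify $D_{n-1}$ with $\{w\in D_n : w(n)=n\}$ and write $\sum_{w\in D_n} w = \sum_{w(n)=n} w\cdot\Phi_n$; by linearity it is enough to check $\phi(w\cdot\Phi_n)=\phi(w)\,\phi(\Phi_n)$ for each fixed $w$ with $w(n)=n$. The key claim is that for each transposition $t_{in}$ (with $n>i\geq\bar n$) occurring in $\Phi_n$, the unique factorization of $w\cdot t_{in}$ is obtained by appending $t_{in}$ to that of $w$. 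Granting this, $\sor_D(w\cdot t_{in}) = \sor_D(w) + \bigl(n - i - 2\chi(i<0)\bigr)$, so $\phi(w\cdot t_{in}) = q^{\,n-i-2\chi(i<0)}\phi(w)$, and summing the $2n-1$ transpositions together with the identity term gives
\[ \phi(w\cdot\Phi_n) = \phi(w)\Bigl(1 + \sum_{n>i\geq\bar n} q^{\,n-i-2\chi(i<0)}\Bigr) = \phi(w)\bigl(1+q^{n-1}\bigr)[n]_q = \phi(w)\,\phi(\Phi_n), \]
where the positive indices contribute $q+q^2+\cdots+q^{n-1}$, the negative indices contribute $q^{n-1}+q^n+\cdots+q^{2n-2}$, and their sum with $1$ is $(1+q^{n-1})[n]_q$. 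Summing over all $w$ with $w(n)=n$ and applying the inductive hypothesis then completes the step.

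The step I expect to be the main obstacle is the key claim, i.e.\ the ``prefix compatibility'' of the type D sorting factorization. I would establish it from uniqueness: regarding $w$ as an element of $D_{n-1}\hookrightarrow D_n$, its factorization uses only transpositions $t_{i_s j_s}$ with $j_s\leq n-1$, so appending $t_{in}$ (whose larger index $n$ exceeds every $j_s$) keeps the second indices strictly increasing, and by uniqueness this must be the factorization of $w\cdot t_{in}$. Equivalently, since $t_{in}^2=1$, the selection sort of $w\cdot t_{in}$ begins by applying $t_{in}$ and thereafter sorts $w$. The care required here is genuinely type-D-specific: unlike type B I cannot invoke reflection length, because the transpositions $t_{\bar i i}$ are not reflections and act by flipping two signs (with $t_{\bar 1 1}=1$), so I must verify directly that the exponent $n - i - 2\chi(i<0)$ is the correct increment for the negative-index transpositions and that no $j=n$ factor already occurs in the factorization of a letter fixing $n$.
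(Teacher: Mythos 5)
Your proposal is correct and takes essentially the same route as the paper: the paper also obtains this corollary by applying the map $\phi(w) = q^{\sor_D(w)}$ to the factorization of Proposition \ref{prp:prodD2}, computing $\phi(\Phi_i) = (1+q^{i-1})[i]_q$, and invoking multiplicativity (it explicitly omits the remaining details, which are exactly what you supply). Your uniqueness-of-factorization argument for the ``appending'' step correctly replaces the reflection-length reasoning used in type B, and both of your product-identity manipulations check out.
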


From Corollaries \ref{cor:D2} and \ref{cor:D} we have shown that $\sor_D$ is Mahonian.

\begin{cor}
The statistics $\inv_D$ and $\sor_D$ are equidistributed over $D_n$: \[ \sum_{w \in D_n} q^{\inv_D(w)} = \sum_{w \in D_n} q^{\sor_D(w)}.\] That is, $\sor_D$ is Mahonian.
\end{cor}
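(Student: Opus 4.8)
The plan is to read the result straight off the two preceding corollaries. Corollary~\ref{cor:D} gives $\sum_{w\in D_n} q^{\inv_D(w)} = D_n(q)$ and Corollary~\ref{cor:D2} gives $\sum_{w\in D_n} q^{\sor_D(w)} = D_n(q)$, where $D_n(q) = [n]_q\prod_{i=1}^{n-1}[2i]_q$. Since both generating functions are equal to one and the same polynomial, they are equal to each other, which is exactly the claimed equidistribution of $\inv_D$ and $\sor_D$. Because $\ell_D = \inv_D$ and $D_n(q)$ is therefore the length generating function of $D_n$, the statistic $\sor_D$ is Mahonian. At this level the corollary is immediate.

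All of the genuine content lives in the two earlier corollaries, so I would make sure those are in hand. Each rests on a factorization of the diagonal sum: Proposition~\ref{prp:prodD} for $\Psi_1\cdots\Psi_{n-1}$ and Proposition~\ref{prp:prodD2} for $\Phi_2\cdots\Phi_n$. The point is that the linear maps $\psi(w)=q^{\inv_D(w)}$ and $\phi(w)=q^{\sor_D(w)}$ each turn these into honest products of scalars: one checks $\psi(\Psi_i)=(1+q^i)[i+1]_q$ and $\phi(\Phi_i)=(1+q^{i-1})[i]_q$, and after the reindexing $j=i-1$ the two products $\prod_{i=1}^{n-1}(1+q^i)[i+1]_q$ and $\prod_{i=2}^{n}(1+q^{i-1})[i]_q$ coincide. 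This common value is what forces the two single-variable generating functions to agree, and it is identical to the type~B-style product $[n]_q\prod_{i=1}^{n-1}[2i]_q$ via $[i]_q(1+q^i)=[2i]_q$.

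The step I expect to require the most care is the multiplicativity $\psi(\Psi_1\cdots\Psi_{n-1})=\prod_i\psi(\Psi_i)$ and $\phi(\Phi_2\cdots\Phi_n)=\prod_i\phi(\Phi_i)$, i.e., that the weight contributed when the largest letter is inserted (for $\inv_D$), or when a transposition $t_{in}$ is appended (for $\sor_D$), depends only on the position and sign of $n$ and not on the rest of $w$. For $\inv_D$ this means confirming that placing $n$ or $\bar n$ into a prescribed slot always adds the same number of type~D inversions; for $\sor_D$ it means confirming that, since $w$ fixes $n$ and its canonical factorization uses only transpositions with second index $<n$, appending $t_{in}$ yields the canonical factorization of $w\,t_{in}$ and contributes exactly $q^{\,n-i-2\chi(i<0)}$ — the correction built into the definition of $\sor_D$ to reflect the ``fork'' structure of $D_n$. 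The type~D features to watch are that $t_{\bar i i}$ is not a reflection (so the $\Phi$ product begins at $\Phi_2$ and $t_{\bar 1 1}$ never appears, and we track only the Mahonian side with no $\ell'_D$ analogue), and that the inductions of Propositions~\ref{prp:prodD} and~\ref{prp:prodD2} are anchored not at a trivial base but at $n=4$, where $\Psi_1\Psi_2\Psi_3$ and $\Phi_2\Phi_3\Phi_4$ must be checked against $\sum_{w\in D_4}w$ directly. Once those verifications are in place, the final corollary follows with no further work.
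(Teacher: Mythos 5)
Your proposal matches the paper's own argument exactly: the paper derives this corollary immediately from Corollaries~\ref{cor:D} and~\ref{cor:D2}, both of which evaluate to the same product $[n]_q\prod_{i=1}^{n-1}[2i]_q$, with all the substantive work (the factorizations of the diagonal sum and the multiplicativity of $\psi$ and $\phi$) residing in the earlier propositions just as you describe. Your attention to the type~D subtleties --- the exclusion of $t_{\bar 1 1}$, the $-2\chi(i<0)$ correction, and the $n=4$ base case --- is consistent with the paper's treatment.
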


\section{Finite Coxeter groups}\label{sec:W}

Given a Coxeter system $(W,S)$ and an element $w \in W$, we define the \emph{length}, $\ell(w)$,  to be the minimal number of terms needed to express $w$ as a product of elements in the set $S$. The set of all reflections of $(W,S)$ is the set $T := \{ wsw^{-1} : w \in W, s \in S\}$. The \emph{reflection length}, $\ell'(w)$, is the minimal number of terms needed to express $w$ as a product of elements in the set $T$. We have the following classical results on the distribution of these statistics for finite Coxeter groups $W$. See \cite[Sections 3.9 and 3.15]{H}.

\begin{thm}
For a finite Coxeter group $W$,
\begin{itemize}
\item[(a)] (Solomon)
\[ \sum_{w \in W} q^{\ell(w)} = \prod_{i=1}^n [e_i+1]_q,\] and
\item[(b)] (Shepard-Todd)
\[\sum_{w \in W} t^{\ell'(w)} = \prod_{i=1}^n (1+ e_it),\]
\end{itemize}
where the $e_i$ are the \emph{exponents} of $W$.
\end{thm}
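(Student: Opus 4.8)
The plan is to route both parts through the invariant theory of $W$, since the exponents $e_i$ (equivalently the degrees $d_i = e_i+1$ of a system of basic invariants) are defined in those terms, and no elementary diagonal-sum factorization of the kind used in Sections~\ref{sec:A}--\ref{sec:D} is available uniformly across all finite Coxeter groups. The common foundation is the Chevalley--Shephard--Todd theorem: if $S = \mathrm{Sym}(V^*)$ is the polynomial algebra on the reflection representation $V$, then the invariant ring $S^W$ is itself a polynomial algebra $\mathbb{C}[f_1,\dots,f_n]$ on homogeneous generators of degrees $d_1,\dots,d_n$. This is what links the purely combinatorial statistics $\ell$ and $\ell'$ to the numbers $e_i$.

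For part (a) I would work with the coinvariant algebra $R_W = S/(f_1,\dots,f_n)$. Chevalley's theorem supplies a graded vector-space isomorphism $S \cong S^W \otimes R_W$, so comparing Hilbert series gives $(1-q)^{-n} = \prod_i (1-q^{d_i})^{-1}\cdot \mathrm{Hilb}(R_W,q)$, hence $\mathrm{Hilb}(R_W,q) = \prod_{i=1}^n [d_i]_q = \prod_{i=1}^n [e_i+1]_q$. The crux is then the identification $\mathrm{Hilb}(R_W,q) = \sum_{w\in W} q^{\ell(w)}$, which amounts to producing a homogeneous basis of $R_W$ indexed by $W$ whose degree distribution is $(\ell(w))_{w\in W}$. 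For Weyl groups this is the Schubert basis of $H^*(G/B)$, whose classes are homogeneous of degree $\ell(w)$; for a general finite Coxeter group one argues via the harmonic polynomials together with a $W$-indexed (Steinberg-type) basis and a degree count.

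For part (b) I would first reduce reflection length to linear algebra: for a finite real reflection group, $\ell'(w) = \dim V - \dim V^w$, the codimension of the fixed space $V^w$. The bound $\ell'(w) \ge \dim V - \dim V^w$ is immediate, since a product of $k$ reflections fixes the intersection of their $k$ hyperplanes, of dimension at least $\dim V - k$; the reverse inequality, which requires expressing $w$ as a product of exactly $\dim V - \dim V^w$ reflections drawn from $T$, is the delicate point (Carter's lemma). Granting this, the theorem reduces to the Shephard--Todd--Solomon identity $\sum_{w\in W} s^{\dim V^w} = \prod_{i=1}^n (s+e_i)$: substituting $s = 1/t$ and multiplying by $t^n$ turns $s^{\dim V^w}$ into $t^{\dim V - \dim V^w} = t^{\ell'(w)}$ and turns $\prod(s+e_i)$ into $\prod(1+e_i t)$, as desired.

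The identity $\sum_{w} s^{\dim V^w} = \prod(s+e_i)$ is the main obstacle in part (b), and I would establish it through Solomon's theorem on invariant differential forms: the module $(\Omega^\bullet)^W$ of $W$-invariant polynomial forms is the free exterior $S^W$-algebra on $df_1,\dots,df_n$, with $df_i$ carrying polynomial degree $e_i$. Forming the bigraded Hilbert series of $(\Omega^\bullet)^W$ by a Molien-type average over $W$ and extracting the appropriate specialization in the exterior variable isolates $\sum_w s^{\dim V^w}$ on one side and $\prod(s+e_i)$ on the other; alternatively one may invoke the freeness of the reflection arrangement, whose characteristic polynomial factors as $\prod_i(s-e_i)$, and relate it to $\sum_w s^{\dim V^w}$ by M\"obius inversion over the lattice of flats. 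I expect the two genuinely hard inputs to be this differential-forms computation and, in part (a), the construction of a length-graded basis of $R_W$ — these are precisely the places where the exponents enter, and neither follows from the elementary group-algebra manipulations used earlier in the paper.
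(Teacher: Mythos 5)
There is no proof in the paper to compare against: the theorem is quoted as a classical result, attributed to Solomon and Shephard--Todd, with a pointer to \cite[Sections 3.9 and 3.15]{H}, and the paper's own factorization machinery (Sections \ref{sec:A}--\ref{sec:D}) recovers it only type by type for $A_n$ and $B_n$. Judged against the standard literature proofs, your outline is essentially sound, and it correctly flags the hard external inputs instead of pretending they are easy. Part (b) follows exactly the classical route: Carter's lemma $\ell'(w)=\dim V-\dim V^{w}$ (you rightly isolate the reverse inequality as the delicate half) reduces (b) to $\sum_{w\in W}s^{\dim V^{w}}=\prod_{i=1}^{n}(s+e_{i})$, which comes from Solomon's theorem on invariant differential forms via the Molien-type identity $|W|^{-1}\sum_{w\in W}\det(1+sw)/\det(1-qw)=\prod_{i}(1+sq^{e_{i}})/(1-q^{e_{i}+1})$; the ``appropriate specialization'' you leave unspecified is $s=-1+(1-q)t$ followed by $q\to 1$, which turns each summand into $t^{\dim V^{w}}$ and the product side into $\prod_{i}(t+e_{i})/|W|$. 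Part (a) is where your sketch is thinnest: deriving Solomon's formula from $\mathrm{Hilb}(R_{W},q)=\prod_{i}[e_{i}+1]_{q}$ requires a homogeneous basis of $R_{W}$ indexed by $W$ whose degrees are exactly $\ell(w)$. The Schubert basis supplies this for Weyl groups, but for $H_{3}$, $H_{4}$, $I_{2}(m)$ the phrase ``harmonic polynomials together with a degree count'' is not an argument --- an arbitrary $W$-indexed homogeneous basis need not have the lengths as its degree multiset, and that is precisely what is at stake. What fills this hole is Hiller's extension of the Bernstein--Gelfand--Gelfand divided-difference construction to all finite Coxeter groups. Alternatively, the standard uniform proof (Solomon's original argument) bypasses the coinvariant algebra entirely: it proves (a) by induction on rank, combining the parabolic recursion $\sum_{J\subseteq S}(-1)^{|J|}P_{W}(q)/P_{W_{J}}(q)=q^{N}$, where $P_{W}(q)=\sum_{w\in W}q^{\ell(w)}$ and $N$ is the number of reflections, with the Molien factorization $|W|^{-1}\sum_{w\in W}\det(1-qw)^{-1}=\prod_{i}(1-q^{e_{i}+1})^{-1}$. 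In short: your architecture is correct and (b) is complete modulo the named theorems of Carter and Solomon; in (a) you should either invoke Hiller's divided-difference basis explicitly or switch to the parabolic-induction proof.
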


\begin{table}
\begin{tabular}{|c||c|}
\hline
$W$ & $e_1,\ldots,e_n$ \\
\hline
$A_n$ & $1,2,3\ldots,n$ \\
$B_n$ & $1,3,5\ldots,2n-3,2n-1$ \\
$D_n$ & $1,3,5\ldots,2n-3,n-1$\\
\hline
\end{tabular}
\caption{Exponents for the Coxeter groups of types $A_n, B_n$, and $D_n$.}
\end{table}

Thus, both the length and reflection length generating functions have a common refinement, defined as follows:
\[ W(q,t) := \prod_{i=1}^n (1 + t[e_i+1]_q - t),\]
and implicitly there are statistics $m$ and $h$ such that:
\begin{align*}
W(q,t) &= \sum_{w \in W} q^{h(w)} t^{\ell'(w)}\\
 &= \sum_{w \in W} q^{\ell(w)} t^{m(w)}.
\end{align*}

For $W=A_n$ and $W=B_n$ we have given explicit descriptions for both $h$\footnote{We use $h$ to suggest \emph{height}, as used in the theory of root systems. Each reflection $t$ (in a Weyl group) corresponds to a unique positive root $\beta$. As we have defined things, in both $A_n$ and $B_n$, $h(\beta) = \sor(t)$, and if $w = t_1\cdots t_k$ is the factorization produced by sorting, then $h(\beta_1)+\cdots + h(\beta_k) = \sor(w)$.} and $m$. It would be nice to have a type-independent description of these statistics. Notice that the $i$th factors Propositions \ref{prp:prodA}, \ref{prp:prodA2}, \ref{prp:prodB}, and \ref{prp:prodB2} each have $e_i+1$ terms. This is not the case for Propositions \ref{prp:prodD} and \ref{prp:prodD2}. In fact, one can show for $D_4$ that such a factorization (at least one that is well-behaved with respect to length or reflection length) does not exist.

\section{Final remarks}\label{sec:rem}

Upon completion of this work, the author was made aware of independent work of Wilson \cite[Sec. 2.2]{W}, in which the type A version of the sorting index is described. Wilson has explored the sorting index further in a recent paper \cite{W2}. This paper establishes in particular that the sorting index is not trivially equivalent to many well-known Mahonian statistics. Nonetheless, work of Galovich and White \cite{GW} gives a recursive method for constructing a bijection $f: S_n  \to S_n$ such that $\sor(w)$ to $\inv(f(w))$. This follows because the sorting index is what Galovich and White call a ``splittable Mahonian" statistic.

Another connection to the type A sorting index appears in recent work of Foata and Han \cite{FH}. In particular, their ``B-code" $(b_1,\cdots, b_n)$ of $w \in S_n$ is given by  $b_{j_s} = j_s -i_s$ for each $j_s$ that appears appears in the factorization $w = t_{i_1 j_1}\cdots t_{i_k j_k}$ of Section \ref{sec:mahA} ($b_j = 0$ otherwise). Moreover, the final paragraph of  \cite[Section 6]{FH} describes a related statistic ``env" which gives rise to an equidistribution result similar to Corollary \ref{cor:Adis}.

It is perhaps unsurprising that several researchers have independently discovered connections with the type A sorting index, as ``straight selection sort" is such a natural operation.

The author would like to thank Dennis White and Mark Wilson for helpful comments and Einar Steingr\'imsson for pointing out the connection with Wilson's work.


\begin{thebibliography}{99}

\bibitem{BS} E. Babson and E. Steingr\'imsson, \emph{Generalized permutation patterns and a classification of the Mahonian statistics},  S\'em. Lothar. Combin.  {\bf 44}  (2000).

\bibitem{BB} A. Bj\"orner and F. Brenti, \emph{Combinatorics of Coxeter groups}, Springer, New York, 2005.

\bibitem{Cl} R. J. Clarke, \emph{A note on some Mahonian statistics},  S\'em. Lothar. Combin. {\bf 53} (2004/06).

\bibitem{CSZ} R. J. Clarke, E. Steingr\'imsson, J. Zeng, \emph{New Euler-Mahonian statistics on permutations and words},  Adv. in Appl. Math.  {\bf 18} (1997), 237--270.

\bibitem{FH2005} D. Foata and G.-H. Han, \emph{Signed words and permutations. II. The Euler-Mahonian polynomials}, Electron. J. Combin. {\bf 11} (2004/06), Research Paper 22, 18 pp.

\bibitem{FH} D. Foata and G.-H. Han, \emph{New permutation coding and equidistribution of set-valued statistics}, Theoret. Comput. Sci. {\bf 410} (2009), 3743Ð-3750.

\bibitem{FZ} D. Foata and D. Zeilberger, \emph{Babson-Steingr\'imsson statistics are indeed Mahonian (and sometimes even Euler-Mahonian)},  Adv. in Appl. Math.  {\bf 27}  (2001),  390--404. 

\bibitem{FZ2} D. Foata and D. Zeilberger, \emph{Denert's permutation statistic is indeed Euler-Mahonian},  Stud. Appl. Math.  {\bf 83}  (1990),   31--59.

\bibitem{GW} J. Galovich and D. White, \emph{Recursive statistics on words}, Discrete Math. {\bf 157} (1996), 169--191.

\bibitem{H} J. E. Humphreys, \emph{Reflection groups and Coxeter groups}, Cambridge, (1990).

\bibitem{K} D. Knuth, \emph{The art of computer programming, vol. 3}, Addison-Wesley, (1998).

\bibitem{M} P. A. MacMahon, \emph{Combinatory analysis, vol. 1}, Chelsea, New York, 1960.

\bibitem{Sl} N. J. A. Sloane, \emph{The on-line encyclopedia of integer sequences}, http://oeis.org/A008302.

\bibitem{St} R. P. Stanley, \emph{Enumerative combinatorics, vol. 1}, Cambridge, (1997).

\bibitem{W} M. C. Wilson, \emph{Random and exhaustive generation of permutations and cycles}, Ann. Comb. {\bf 12} (2009), 509--520.

\bibitem{W2} M. C. Wilson, \emph{An interesting new Mahonian permutation statistic}, Elec. J. Combin., to appear.

\end{thebibliography}
\end{document}